\documentclass[english]{article}
\usepackage[T1]{fontenc}
\usepackage[latin9]{inputenc}
\setlength{\parskip}{\medskipamount}
\setlength{\parindent}{0pt}
\usepackage[active]{srcltx}
\usepackage{babel}
\usepackage{babelbib}
\bibliographystyle{babplain-lf}
\usepackage{amsmath}
\usepackage{amsthm}
\usepackage{amsfonts}
\usepackage{amssymb}
\usepackage{sectsty}
\allsectionsfont{\centering \normalfont\scshape}
\usepackage{bbm}
\usepackage{dsfont}
\usepackage{calligra}
\usepackage{calrsfs}
\usepackage{graphicx}
\usepackage[all]{xy}
\usepackage{tikz}
\usetikzlibrary{arrows,shapes}
\usepackage{verbatim}
\usepackage[unicode=true,
 bookmarks=false,
 breaklinks=false,pdfborder={0 0 1},backref=section,colorlinks=false]
 {hyperref}
\usepackage{breakurl}

\theoremstyle{plain}
\newtheorem*{thm*}{\protect\theoremname}
\newtheorem{thm}{\protect\theoremname}
\newtheorem{theorem}{\protect\theoremname}
\newtheorem{lemma}[thm]{\protect\lemmaname}
\newtheorem{corollary}[thm]{\protect\corollaryname}
\newtheorem*{corrthm*}{ZC-Representation Theorem}{\bf}{\it}
\newtheorem{lemma*}{Lemma}{\bf}{\it}
\theoremstyle{definition}
\newtheorem{definition}[thm]{\protect\definitionname}
\newtheorem*{definition*}{Definition}{\bf}

\providecommand{\corollaryname}{Corollary}
\providecommand{\definitionname}{Definition}

\providecommand{\lemmaname}{Lemma}

\providecommand{\theoremname}{Theorem}

\theoremstyle{definition}

\newcommand{\N}{\mathbb{N}}

\newcommand{\Q}{\mathbb{Q}}
\newcommand{\R}{\mathbb{R}}

\usepackage{xspace}
\newcommand{\ignore}[1]{}
\newcommand{\dapm}{DAPM\xspace}
\newcommand{\dapms}{DAPMs\xspace}
\newcommand{\gapm}{GAPM\xspace}
\newcommand{\gapms}{GAPMs\xspace}

\newcommand{\dampcat}{$R$\mbox{-}\textbf{Persmod}}
\newcommand{\rtmodcat}{$R[t]$\mbox{-}\textbf{Gr-Mod}}
\newcommand{\gampcat}{$R$\mbox{-}\textbf{Persmod}^G}
\newcommand{\rgmodcat}{$R[$G$]$\mbox{-}\textbf{Gr-Mod}}

\newcommand{\rtmodname}{\mathbf{M}}
\newcommand{\rgmodname}{\mathbf{M}}

\newcommand{\dapmname}{\mathcal{M}}
\newcommand{\otherdapmname}{\mathcal{N}}
\newcommand{\gapmnameNk}{\mathcal{M}_{\N^k}}
\newcommand{\gapmname}{\mathcal{M}_G}
\newcommand{\othergapmname}{\mathcal{N}_G}
\newcommand{\fpmapping}{\mu}

% \makeatother

\begin{document}

\title{The Representation Theorem of Persistence Revisited and Generalized}

\author{Ren\'e Corbet\thanks{Graz University of Technology, corbet@tugraz.at }~~~~~~
Michael Kerber\thanks{Graz University of Technology, kerber@tugraz.at }}
\maketitle

\begin{abstract}
The Representation Theorem by Zomorodian and Carlsson has been the
starting point of the study of persistent homology under the lens
of representation theory. In this work, we give a more accurate
statement of the original theorem and provide a complete and self-contained
proof. Furthermore, we generalize the statement from the case of linear
sequences of $R$-modules to $R$-modules indexed over more ge\-neral
monoids. This generalization subsumes the Representation Theorem of
multidimensional persistence as a special case. 
\end{abstract}

\section{Introduction}

\label{sec:introduction}
\emph{Persistent homology}, introduced by Edelsbrunner et al.~\cite{elz-topological},
is a multi-scale extension of classical homology theory. The idea
is to track how homological features appear and disappear in a shape
when the scale parameter is increasing. This data can be summarized
by a \emph{barcode} where each bar corresponds to a homology class
that appears in the process and represents the range of scales where
the class is present. The usefulness of this paradigm in the context
of real-world data sets has led to the term \emph{topological data
analysis}; see the surveys~\cite{carlsson-survey,ghrist-survey,em-persistent,mvj-survey,kerber-survey}
and textbooks~\cite{eh-computational,oudot-book} for various use
cases.

A strong point of persistent homology is that it can be defined and
motivated both in geometric and in algebraic terms. For the latter, the main
object are \emph{persistence modules}. In the simplest case, such
a persistence module consists of a sequence of $R$-modules indexed
over $\mathbb{N}$ and module homomorphisms connecting consecutive
modules, as in the following diagram:

\[
\begin{xy}\xymatrix{M_{0}\ar[r]^{\varphi_{0}} & M_{1}\ar[r]^{\varphi_{1}} & \dots\ar[r]^{\varphi_{i-1}} & M_{i}\ar[r]^{\varphi_{i}} & M_{i+1}\ar[r]^{\varphi_{i+1}} & \dots}
\end{xy}
\]

A persistence module as above is of \emph{finitely generated type}
if each $M_{i}$ is finitely generated and there is an $m\in\mathbb{N}$
such that $\varphi_{i}$ is an isomorphism for all $i\geq m$. Under
this condition, Zomorodian and Carlsson~\cite{ZC05} observed that
a persistence module can be expressed as single module over the polynomial
ring $R[t]$:

\begin{corrthm*}\textit{{[}Theorem 3.1 in \cite{ZC05}{]} Let $R$ be a commutative
ring with unity. The category of persistence modules of finitely generated
type\footnote{In~\cite{ZC05}, the term ``finite type'' is used instead, but
we renamed it here as we will define another finiteness condition
later.} over $R$ is equivalent to the category of finitely generated graded
modules over $R[t]$. }\end{corrthm*}

The importance of this equivalence stems from the case most important
for applications, namely if $R$ is a field. In this case, graded
$R[t]$-modules, and hence also persistence modules of finitely generated
type, permit a decomposition

\[
\left(\bigoplus_{i=1}^{n}\Sigma^{\alpha_{i}}R\left[t\right]\right)\oplus\left(\bigoplus_{j=1}^{m}\Sigma^{\beta_{j}}R\left[t\right]/\left(t^{n_{j}}\right))\right)
\]

where $\Sigma^{\cdot}$ denotes a shift in the grading. The integers
$\alpha_{i},\beta_{j},n_{j}$ give rise to the aforementioned barcode
of the persistence module; see~\cite{ZC05} for details. Subsequent
work studied the property of more general persistence modules, for
instance, for modules indexed over any subset of $\mathbb{R}$ (and
not necessarily of finite type)~\cite{csgo-structure,crawley-boevey-decomposition}
and for the case that the $M_{i}$ and $\varphi_{i}$ are replaced
with any objects and morphisms in a target category~\cite{BDSS13,BS14}.

\medskip{}

Given the importance of the ZC-Representation Theorem, it is remarkable
that a comprehensive proof seems not to be present in the literature. 
In~\cite{ZC05},
the authors assign an $R[t]$-module to a persistence module of finite
type and simply state: 
\begin{quote}
The proof is the Artin-Rees theory in commutative algebra (Eisenbud,
1995). 
\end{quote}
In Zomorodian's textbook~\cite{afra-book}, the same statement is
accompanied with this proof (where $\alpha$ is the assignment mentioned
above): 
\begin{quote}
It is clear that $\alpha$ is functorial. We only need to construct
a functor $\beta$ that carries finitely generated non-negatively
graded $k[t]$-modules [sic] to persistence modules of finite{[}ly generated{]}
type. But this is readily done by sending the graded module $M=\oplus_{i=0}^{\infty}M_{i}$
to the persistence module $\{M_{i},\varphi_{i}\}_{i\in\mathbb{N}}$
where $\varphi_{i}:M^{i}\rightarrow M^{i+1}$ is multiplication by
t. It is clear that $\alpha\beta$ and $\beta\alpha$ are canonically
isomorphic to the corresponding identity functors on both sides. This
proof is the Artin-Rees theory in commutative algebra (Eisenbud, 1995). 
\end{quote}

While that proof strategy works for the most important case of fields, 
it fails for ``sufficiently'' bad
choices of $R$, as the following example shows:

\smallskip{}
Let $R=\mathbb{Z}[x_{1},x_{2},\ldots]$ and consider the graded $R[t]$
module $M:=\oplus_{i\in\mathbb{N}}M_{i}$ with $M_{i}=R/<x_{1},\ldots,x_{i}>$
where multiplication by $t$ corresponds to the map $M_{i}\rightarrow M_{i+1}$
that assigns $p\,\mathrm{mod}\,x_{i}$ to a polynomial $p$. $M$ is generated
by $\left\{ 1\right\} $. However, the persistence module $\beta(M)$
as in Zomorodian's proof is not of finitely generated type, because
no inclusion $M_{i}\rightarrow M_{i+1}$ is an isomorphism.

\smallskip{}

This counterexample raises the question: what are the requirements
on the ring $R$ to make the claimed correspondence valid? In the
light of the cited Artin-Rees theory, it appears natural to require
$R$ to be a Noetherian ring (that is, every ascending chain of ideals
becomes stationary), because the theory is formulated for such rings
only; see \cite{Eis95,GP07}. Indeed, as carefully exposed in the master's
thesis of the first author~\cite{corbet-master}, the above proof
strategy works under the additional assumption of $R$ being Noetherian. 
We sketch the proof in Appendix~\ref{artin-rees-appendix}.

\paragraph{Our contributions.}

As our first result, we prove a generalized version of the ZC-Representation
Theorem. In short, we show that the original statement becomes valid
without additional assumptions on $R$ if ``finitely generated type''
is replaced with ``finitely presented type'' (that is, in particular,
every $M_{i}$ must be finitely presented). 
Furthermore, we remove the requirement of $R$ being commutative
and arrive at the following result.
\begin{thm*}
Let $R$ be a ring with unity. The category of persistence
modules of finitely presented type over $R$ is isomorphic to the
category of finitely presented graded modules over $R[t]$. 
\end{thm*}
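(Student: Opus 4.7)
The plan is to construct two functors $\alpha$ and $\beta$ that are \emph{strict} mutual inverses, yielding an isomorphism of categories rather than a mere equivalence. The functor $\alpha$ sends a persistence module $(M_i,\varphi_i)_{i\in\N}$ to the graded $R[t]$-module $\bigoplus_{i\in\N} M_i$, where $t$ acts as $\varphi_i\colon M_i \to M_{i+1}$; on morphisms, a compatible family $(f_i)$ goes to $\bigoplus_i f_i$, which commutes with the $t$-action by the compatibility relation. The functor $\beta$ sends a graded $R[t]$-module $\bigoplus_{i\in\N} M_i$ to the persistence module whose $i$-th structure map is multiplication by $t$; on graded morphisms, it takes the restrictions to each graded piece. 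Since $t$ is a central indeterminate, this makes sense even when $R$ is noncommutative, and both compositions $\alpha\circ\beta$ and $\beta\circ\alpha$ literally recover the input data, so they are the identity functors without any canonical identification being needed.

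The substantive task is to check that each functor lands in the claimed target category. For $\alpha$, suppose each $M_i$ is finitely presented over $R$ and fix $m$ with $\varphi_i$ an isomorphism for $i\geq m$. I would combine finite presentations of $M_0,\dots,M_m$, placed in the appropriate graded shifts, into a finite list of generators for $\bigoplus_i M_i$; a finite set of relations then encodes both the internal $R$-relations in each $M_i$ and the requirement that $t$ act as $\varphi_i$. Beyond degree $m$ no new generators or relations are introduced because $\varphi_i$ is already invertible, so the result is finitely presented as a graded $R[t]$-module.

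For $\beta$, start from a finite presentation
\[
\bigoplus_{j=1}^{s}\Sigma^{b_j}R[t] \xrightarrow{\psi} \bigoplus_{i=1}^{r}\Sigma^{a_i}R[t] \longrightarrow \mathbf{M} \longrightarrow 0.
\]
Extracting the degree-$n$ part yields, for every $n\in\N$, an exact sequence of $R$-modules
\[
\bigoplus_{b_j\leq n} R \longrightarrow \bigoplus_{a_i\leq n} R \longrightarrow M_n \longrightarrow 0,
\]
so each $M_n$ is finitely presented over $R$. For $n\geq \max(\max_i a_i,\max_j b_j)$ all summands are present in both degrees $n$ and $n+1$, and multiplication by $t$ on the free modules identifies the two presentations verbatim; hence $\varphi_n$ is an isomorphism beyond that threshold.

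I expect the $\beta$ direction to be the main obstacle: it requires careful bookkeeping with graded shifts and degreewise restrictions, and it is precisely here that the passage from ``finitely generated type'' to ``finitely presented type'' does the work previously done by the Noetherian hypothesis, by controlling the relations of each $M_n$ uniformly in $n$. Once these four points---well-definedness of $\alpha$, well-definedness of $\beta$, strict inversion on objects, and strict inversion on morphisms---are verified, the isomorphism of categories follows immediately, and no commutativity of $R$ is used anywhere.
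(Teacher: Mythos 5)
Your proposal is correct and follows the same global strategy as the paper: the same functors $\alpha$ and $\beta$, strict mutual inversion (so an isomorphism, not just an equivalence, of categories), and the real content concentrated in showing each functor preserves the finiteness condition. Where you differ is in how the two verifications are organized. For the $\beta$ direction you restrict a homogeneous finite presentation $\bigoplus_j\Sigma^{b_j}R[t]\to\bigoplus_i\Sigma^{a_i}R[t]\to\mathbf{M}\to 0$ to each degree $n$ (cokernels of graded maps are computed degreewise) and then observe that for $n\geq\max(a_i,b_j)$ multiplication by $t$ carries the degree-$n$ presentation bijectively onto the degree-$(n+1)$ one, so the induced map on cokernels is an isomorphism. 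This is a cleaner, more homological packaging than the paper's Lemmas~\ref{lem:comp_are_finitely_presented} and~\ref{lem:beta_lemma}, which establish the same two facts by explicit coefficient comparison and by a separate surjectivity/injectivity argument using freeness of $R[t]^n$. Conversely, the $\alpha$ direction --- which you predict to be the easy one --- is where the paper does most of its work (Lemma~\ref{lem:alpha_lemma}): the nontrivial claim is not that your finite list of relations (internal relations of each $M_i$ plus the relations $t^{j-i}\mathfrak{e}_i-\sum_v\lambda_v\mathfrak{e}_j^{(v)}$ encoding the $\varphi_{i,j}$) lies in the kernel, but that it \emph{generates} the entire kernel of $R[t]^n\to\alpha(\dapmname)$. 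That requires a case analysis on the degree of a homogeneous relation; in particular, for degree $k>m$ one must use freeness of $R[t]^n$ to factor the relation as $t^{k-m}$ times a relation of degree $m$, and for degree $k\leq m$ one must reduce coefficients of positive $t$-degree using the $\mathfrak{Z}_{i,j}$-type relations before invoking the presentations of the individual $M_i$. Your sketch contains the right idea but this step should not be waved through as ``no new relations are introduced''; it is the one place where the argument genuinely has to be carried out.
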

The example from above does not violate the statement of this theorem
because the module $M$ is not finitely presented. Also, the statement
implies the ZC-Representation Theorem for commutative Noetherian rings,
because if $R$ is commutative with unity and Noetherian, finitely
generated modules are finitely presented.

Our proof follows the same path as sketched by Zomorodian, using the
functors $\alpha$ and $\beta$ to define a (straight-forward) correspondence
between persistence mo\-dules and graded $R[t]$-modules. The technical
difficulty lies in showing that these functors are well-defined if
restricted to subclasses of finitely presented type. It is worth to
remark that our proof is elementary and self-contained and does not
require Artin-Rees theory at any point. We think that the ZC-Representation
Theorem is of such outstanding importance in the theory of persistent
homology that it deserves a complete proof in the literature.

\smallskip{}

As our second result, we give a Representation Theorem for a more
general class of persistence modules. We work over an arbitrary ring
$R$ with unity and generalize the indexing set of persistence modules
to a monoid $(G,\star)$.\footnote{Recall that a monoid is almost a group,
except that elements might not have inverses.} 
We consider a subclass which we call ``good'' monoids in this work
(see Section~\ref{sec:prelim_monoid} for the definition and a discussion of 
related concepts). Among them is the case $\mathbb{N}^{k}$ 
corresponding to multidimensional persistence modules,
but also other monoids such as $(\Q_{\geq 0},+)$, $(\Q\cap(0,1],\cdot)$ and 
the non-commutative word monoid as illustrated in Figure~\ref{fig:graph_word}. 
It is not difficult to show that such generalized persistence modules can be 
isomorphically described as a single module over the monoid ring $R[G]$.

Our second main result 
is that finitely presented graded modules over $R[G]$ correspond
again to generalized persistence modules with a finiteness condition.
Specifically, finiteness means that there exists a finite set $S$
of indices (i.e., elements in the monoid) such that for each monoid
element $g$ with associated $R$-module $R_{g}$, there exists an
$s\in S$ such that each map $R_{s}\rightarrow R_{\tilde{g}}$ is an 
isomorphism, whenever $\tilde{g}$ lies between $s$ and $g$. 

For $G=\N^k$, we prove that 
this condition is equivalent to the property that all
sequences in our persistence module are of finite type 
(as a persistence module over $\mathbb{N}$), see Figure~\ref{fig:graph_grid}, 
but this equivalence fails for general (good) monoids. 
Particularly, our second main result implies the first one, because
for $G=\N$, the monoid ring $R[\N]$ is precisely the polynomial ring $R[t]$.

\begin{figure}
        \centering
	\tikzset{treenode/.style = {align=center, inner sep=0pt, text centered, font=\sffamily},
			arr/.style = {treenode, circle, white, font=\sffamily\bfseries, draw=black,fill=blue, text width=2.5em}, 
  			arrempty/.style = {treenode, circle, black, font=\sffamily\bfseries, draw=white,fill=white, text width=1.5em,  minimum width=0.5em, minimum height=0.5em, very thick}
	}
	\begin{tikzpicture}[->,>=stealth',level/.style={sibling distance = 5.5cm/#1,
		level distance = 1.2cm}] 

		\node [arr] {$\mathbf{M_e}$}
			child{ node [arr] {$\mathbf{M_a}$} 
          				 child{ node [arr] {$\mathbf{M_{aa}}$} 
          					child{ node [arrempty] {\reflectbox{$\ddots$}$\,\ddots$} 	
					} 
					child{ node [arrempty] {\reflectbox{$\ddots$}$\,\ddots$} }
          					}
   					 child{ node [arr] {$\mathbf{M_{ab}}$}
					child{ node [arrempty] {\reflectbox{$\ddots$}$\,\ddots$} }
					child{ node [arrempty] {\reflectbox{$\ddots$}$\,\ddots$} }
		   	   }                            
		  }
 			 child{ node [arr] {$\mathbf{M_b}$}
			 child{ node [arr] {$\mathbf{M_{ba}}$} 
						child{ node [arrempty] {\reflectbox{$\ddots$}$\,\ddots$} }
						child{ node [arrempty] {\reflectbox{$\ddots$}$\,\ddots$} }
          	  }
          			  child{ node [arr] {$\mathbf{M_{bb}}$}
						child{ node [arrempty] {\reflectbox{$\ddots$}$\,\ddots$} }
						child{ node [arrempty] {\reflectbox{$\ddots$}$\,\ddots$} }
         	   }
	}
;
\end{tikzpicture}
	  \caption{Graphical illustration of a generalized persistence module. The underlying
		monoid is the set of words over $\{a,b\}$. For each monoid element, the persistence
		module contains an $R$-module, and for each arrow, the module contains
		a homomorphism (which is not specified in the figure).}
	\label{fig:graph_word} 
\end{figure}

\begin{figure}
        \centering
\begin{tikzpicture}
	\tikzstyle{grid lines}=[lightgray,line width=0]
	\draw[style=grid lines, step=1cm] (0,0) grid (11.5,5.5);
	\foreach \r in {0,1,..., 11}
		\draw (\r,0) node[inner sep=1pt,below=2pt,rectangle,fill=white] {\fontsize{4}{7}$\r$};
	\foreach \r in {0,1, 2,..., 5}
		\draw (0,\r) node[inner sep=1pt,left=2pt,rectangle,fill=white] {\fontsize{4}{7}$\r$};
	\tikzstyle{every node}=[circle, draw, fill=blue, inner sep=0pt, minimum width=4pt]
	\draw[gray] (0,0)  -- (0,5.5) ;
	\draw[gray]  (0,0)  -- (11.5,0) ;
	\draw[shorten >=2pt,->,>=stealth'] (0,0) node {}  -- (2,3) node {};
	\draw[shorten >=2pt,->,>=stealth'] (2,3) node {}  -- (5,4) node {};
	\draw[shorten >=2pt,->,>=stealth'] (5,4) node {}  -- (7,4) node {};
	\draw[shorten >=2pt,->,>=stealth'] (7,4) node {}  -- (7,5) node {};
	\draw (7,5) node {}  -- (8,5.5) ;
	\draw[shorten >=2pt,->,>=stealth'] (0,0) node {}  -- (2,1) node {};
	\draw[shorten >=2pt,->,>=stealth'] (2,1) node {}  -- (3,1) node {};
	\draw[shorten >=2pt,->,>=stealth'] (3,1) node {}  -- (6,2) node {};
	\draw[shorten >=2pt,->,>=stealth'] (6,2) node {}  -- (10,3) node {};
	\draw (10,3) node {}  -- (11.5,4.5) ;

\end{tikzpicture}
     
	  \caption{Graphical illustration of two sequences in a generalized persistence module over the monoid $\mathbb{N}^2$. The corresponding $R$-modules and homomorphisms are not specified in the figure.}
	\label{fig:graph_grid} 
\end{figure}
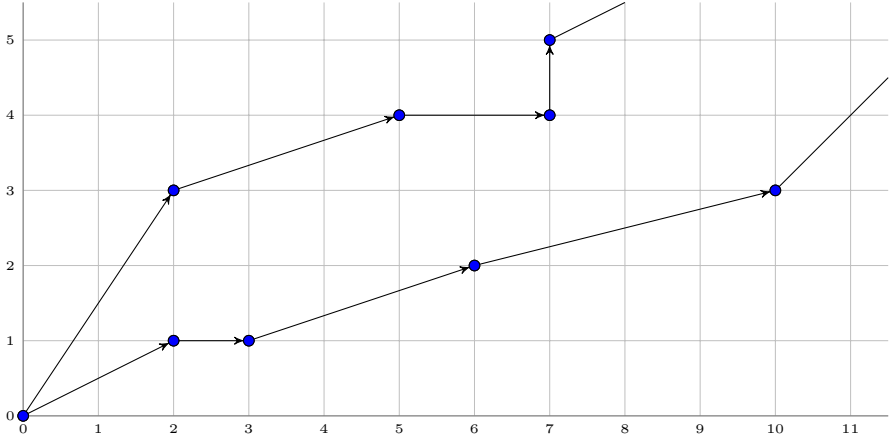

\paragraph{Outline.}

Although our first main result is a special case of the second one,
we decided to give a complete treatment of the classical case of linear
sequences first. For that, we introduce the necessary basic concepts
in Section~\ref{sec:basic_notions} and prove the Representation
Theorem in Section~\ref{sec:representation_over_N}. The additional
concepts required for the monoidal case are introduced in Section~\ref{sec:prelim_monoid}.
The Generalized Representation Theorem is proved in Section~\ref{sec:representation_over_G}.
We conclude in Section~\ref{sec:conclusion}.

\section{Basic notions }

\label{sec:basic_notions}

\paragraph{Category theory.}

We review some basic concepts of category theory as needed in this
exposition. See~\cite{AHS09,ML71} for comprehensive introductions.
A \emph{category} ${\bf C}$ is a collection of \emph{objects} and
\emph{morphisms}, which have to satisfy associativity and identity
axioms: for all morphisms $\alpha\in\text{hom}\left(W,X\right)$,
$\beta\in\text{hom}\left(X,Y\right)$, $\gamma\in\text{hom}\left(Y,Z\right)$
there are compositions $\beta\circ\alpha\in\text{hom}\left(W,Y\right)$
and $\gamma\circ\beta\in\text{hom}\left(X,Z\right)$  which ensure
$\left(\gamma\circ\beta\right)\circ\alpha=\gamma\circ\left(\beta\circ\alpha\right)$.
If it is not clear which category the morphisms belong to, one
writes $\text{hom}_{{\bf C}}$ to denote the category ${\bf C}$.
Furthermore, for all objects $X$ in a category ${\bf C}$ there are
identity morphisms $1_{X}\in\text{hom}\left(X,X\right)$ such that
$1_{X}\circ\alpha=\alpha$ and $\beta\circ1_{X}=\beta$ for all $\alpha\in\text{hom}\left(W,X\right)$,
$\beta\in\text{hom}\left(X,Y\right)$. One example of a category is
the collection of all sets as objects and maps between sets as morphisms.
Others are topological spaces and continuous maps, groups and group
homomorphisms, and vector spaces over a fixed field and linear maps.

A \emph{functor} $F:{\bf C}\rightarrow{\bf D}$ carries the information
from one category to another. It says that $F\left(\beta\circ\alpha\right)=F\left(\beta\right)\circ F\left(\alpha\right)$
and $F\left(1_{X}\right)=1_{F\left(X\right)}$ for all $\alpha\in\text{hom}_{{\bf C}}\left(W,X\right)$,
$\beta\in\text{hom}_{{\bf C}}\left(X,Y\right)$. It suffices to
define functors on the morphisms only, but usually one also specifies
$F\left(X\right)$ for clarity. A simple example of a functor
is the \emph{identity functor} of a category, which simply maps each
object and each morphism on itself. As another example, homology is
a functor from the category of topological spaces to the category
of abelian groups.

Functors describe similarities between
two categories. Two categories ${\bf C},{\bf D}$ are called \emph{isomorphic}
if there are functors $F:{\bf C}\rightarrow{\bf D}$, $G:{\bf D}\rightarrow{\bf C}$
such that $F\circ G$ is the identity functor on ${\bf D}$ and $G\circ F$
is the identity functor on ${\bf C}$. In this case, we also call
the pair $(F,G)$ of functors an \emph{isomorphic pair}.

\paragraph{Graded rings and modules.}

We only consider rings with unity and usually denote them by $R$. A \emph{left-ideal} $I$ of $R$ is an additive subgroup of $R$ such
that $r\in R$ and $x\in I$ implies that $rx\in I$. Replacing
$rx$ with $xr$ defines a \emph{right-ideal}. A subgroup $I$ is
called \emph{ideal }if it is a left-ideal and a right-ideal.

An $R$-(left-)module $\left(M,+,\cdot\right)$ is an abelian group with
a scalar multiplication (from the left), which is a bi-additive group action of $R$
on it. We usually denote modules by $M$.
For example, every left-ideal of $R$ is also an $R$-module. An \emph{$R$-module
morphism} be\-tween $R$-modules $M$ and $N$ is a group homomorphism
$f:M\rightarrow N$ that also satisfies $f(rx)=rf(x)$ for all $r\in R$
and $x\in M$.

A ring $S$ is \emph{$\N$-graded}, or just \emph{graded} if $S$
can be written as $S=\oplus_{i\in\mathbb{N}}S_{i}$ where each $S_{i}$
is an abelian group and $s_{i}\cdot s_{j}\in S_{i+j}$ whenever $s_{i}\in S_{i}$
and $s_{j}\in S_{j}$. If $R$ is a ring,
the polynomial ring $R[t]$ is naturally graded with $R[t]_{i}$ being
the $R$-module generated by $t^i$. While $R[t]$ might permit
different gradings (e.g. if $R$ itself is a polynomial ring), we
will always assume that $R[t]$ is graded in the above way.

If $S=\oplus S_{i}$ is a graded ring, an $S$-module $M$ is \emph{graded}
if there is a decomposition $M=\oplus_{i\in\mathbb{N}}M_{i}$ such
that each $M_{i}$ is an abelian group and $s_{i}\cdot m_{j}\in M_{i+j}$
whenever $s_{i}\in S_{i}$ and $m_{j}\in M_{j}$. As a trivial example,
$S$ is a graded $S$-module itself. By definition, every nonzero
element of $M$ can be written as a finite sum 
\[
m=m_{i_{1}}+m_{i_{2}}+\ldots+m_{i_{k}}
\]
with $k\geq1$, $i_{1}<i_{2}<\ldots<i_{k}$, and $m_{i_{j}}\in M_{i_{j}}$.
If $k=1$, $m=m_{i_{1}}$ is called \emph{homogeneous of degree $i_{1}$}.

A \emph{graded morphism} $f:M\rightarrow N$ be\-tween graded $S$-modules
is an $S$-module morphism such that $f(M_{i})\subset N_{i}$ for all
$i\in\N$. Fixing a ring $R$, the collection of all graded $R[t]$-modules
together with graded morphisms yields another example of a category,
which we will denote by $\rtmodcat$.

\paragraph{Finiteness conditions for modules.}

An $R$-module $M$ is called \emph{finitely generated} if there exist
finitely many elements $\mathfrak{g}_{1},\ldots,\mathfrak{g}_{n}$ 
in $M$ such that every $x\in M$ can be written as $x=\sum_{i=1}^{n}\lambda_{i}\mathfrak{g}_{i}$
with $\lambda_{i}\in R$. The set $\{\mathfrak{g}_{1},\ldots,\mathfrak{g}_{n}\}$
is called the \emph{generating set} of $M$. Equivalently, $M$ is
finitely generated if and only if there exists a surjective module
morphism 
\[
R^{n}\stackrel{\fpmapping}{\rightarrow}M
\]
where $R^{n}$ is just the free abelian $R$-module with $n$ generators
$\mathfrak{e}_{1},\ldots,\mathfrak{e}_{n}$. If $\fpmapping$ maps
$\mathfrak{e}_{i}$ to $\mathfrak{g}_{i}$, we call $\fpmapping$
\emph{associated} to the generating set $\{\mathfrak{g}_{1},\ldots,\mathfrak{g}_{n}\}$.

In general, $\fpmapping$ is not injective and there are relations
be\-tween the generators (which are also sometimes called \emph{syzygies}).
$M$ is called \emph{finitely presented} if it is finitely generated
and the $R$-module $\ker\,\fpmapping$ is finitely generated as well.
Equivalently, finitely presented means that there exists an exact
sequence 
\[
R^{m}\rightarrow R^{n}\stackrel{\fpmapping}{\rightarrow}M\rightarrow0.
\]
Clearly, finitely presented modules are finitely generated, but the
converse is not true as the example from the introduction shows. Note
that morphisms be\-tween finitely presented modules always imply
morphisms on the corresponding free modules, such that the following
diagram commutes: 
\[
\begin{xy}\xymatrix{R^{m_{1}}\ar[r]^ {}\ar[d]^{\varphi_{R}} & R^{n_{1}}\ar[d]^{\varphi_{G}}\ar[r]^{\mu} & M\ar[d]^{\varphi}\\
R^{m_{2}}\ar[r]^ {} & R^{n_{2}}\ar[r]^{\nu} & N
}
\end{xy}
\]
So $\varphi$ can be represented as a
matrix of two blocks, which describe how ge\-nerators and relations
are changing. For an easy proof, see Lemma 2.1.25 in~\cite{GP07} (which also holds
for non-commuta\-tive rings). 

A ring $R$ is called \emph{(left-/right-)Noetherian} if every (left-/right-)ideal
of $R$ is finitely generated. In particular, every principal ideal domain
is Noetherian. We point out the following important statements on Noetherian rings: 
\begin{lemma}
\label{lem:noethersch} Let $R$ be a Noetherian ring with unity.
Then every finitely generated $R$-module
is finitely presented. If $R$ is also commutative, then $R[t]$ is Noetherian.
\end{lemma}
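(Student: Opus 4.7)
The plan is to handle the two assertions separately, since they rest on different arguments. For the first, I would work through the standard equivalent formulations of the Noetherian property for modules: an $R$-module is Noetherian, meaning every ascending chain of submodules stabilizes, if and only if every submodule is finitely generated. The key intermediate step is to prove that $R^n$ is Noetherian as an $R$-module for every $n \in \N$, which I would do by induction on $n$. The base case $n=1$ reduces to the hypothesis that every left-ideal of $R$ is finitely generated. For the inductive step I would use the short exact sequence $0 \to R \to R^{n+1} \to R^n \to 0$ together with the general fact that in a short exact sequence of modules, the middle term is Noetherian whenever the two outer terms are; this last fact is a direct chain-chasing argument using the fact that a submodule of the middle is controlled by its image and its intersection with the kernel. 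Given this, if $M$ is finitely generated with associated surjection $\fpmapping : R^n \to M$, then $\ker \fpmapping$ is a submodule of the Noetherian module $R^n$, hence finitely generated, which is exactly the definition of finite presentation.

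For the second assertion I would prove Hilbert's Basis Theorem in the usual way. Take an arbitrary ideal $I \subseteq R[t]$ and, for each $d \in \N$, let $I_d \subseteq R$ be the set of leading coefficients of polynomials in $I$ of degree exactly $d$, together with $0$. Commutativity of $R$ ensures that each $I_d$ is an ideal of $R$, and the inclusion $f \mapsto tf$ shows $I_0 \subseteq I_1 \subseteq \cdots$. Since $R$ is Noetherian this chain stabilizes at some $I_N$, and each $I_d$ for $d \le N$ admits a finite generating set $\{a_{d,1}, \ldots, a_{d,k_d}\}$. For each $a_{d,j}$ choose a lift $f_{d,j} \in I$ of degree $d$ with that leading coefficient; I would then claim that the finite collection $\{f_{d,j} : 0 \le d \le N,\, 1 \le j \le k_d\}$ generates $I$.

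The verification of this claim is by induction on the degree $e$ of an arbitrary $f \in I$. Writing $a$ for the leading coefficient of $f$, if $e \le N$ one has $a = \sum_j r_j a_{e,j}$ for some $r_j \in R$, and $f - \sum_j r_j f_{e,j}$ lies in $I$ with strictly smaller degree. If $e > N$ one instead writes $a = \sum_j r_j a_{N,j}$ and subtracts $\sum_j r_j t^{e-N} f_{N,j}$, again producing a polynomial in $I$ of strictly smaller degree, and induction finishes the argument.

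The main obstacle is the second part: the whole calculation in the induction hinges on the identity that the leading coefficient of $r t^{e-d} f_{d,j}$ is $r\, a_{d,j}$, and this is where commutativity of $R$ (hence of $R[t]$) is essential, both to make $I_d$ an ideal and to make the combinations lie in the ideal generated by the $f_{d,j}$. The first part, by contrast, is purely formal and goes through for any ring with unity, which is consistent with the paper's later use of finite presentation in the non-commutative setting.
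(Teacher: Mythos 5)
Your proposal is correct, but it is worth noting that the paper does not actually prove this lemma: it simply cites Lam (Proposition 4.29) for the first assertion and van der Waerden (Hilbert's Basis Theorem) for the second. What you have written out are precisely the standard arguments behind those citations, and both are sound: the induction showing $R^n$ is a Noetherian module via the extension property for short exact sequences, followed by the observation that $\ker\fpmapping\subseteq R^n$ is then finitely generated; and the classical leading-coefficient-ideal proof of the Basis Theorem, including the stabilizing chain $I_0\subseteq I_1\subseteq\cdots$ and the degree induction. Your approach buys self-containedness (in the spirit of the paper's stated goal of giving elementary, complete proofs), at the cost of reproving textbook material; the paper's approach keeps the exposition short by outsourcing exactly these two facts. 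One small caution on your closing remark: the first assertion is not valid for ``any ring with unity'' --- it genuinely needs $R$ to be (left-)Noetherian, since that is what makes $\ker\fpmapping$ finitely generated --- though I take your intended point to be that it needs no commutativity, which is correct and is indeed what the paper relies on in the non-commutative setting.
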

\begin{proof}
The second part is Hilbert's Basis Theorem (see~\cite{vdw93}, 15.1). For the first part, see~\cite{lam99} Proposition 4.29.  
\end{proof}
We will mostly consider the case of finitely generated/presented graded
modules. So, let $M$ be a finitely generated graded $S$-module (where
$S$ is graded, but not necessarily Noetherian). It is not difficult
to see that $M$ is also generated by a finite set of homogeneous
elements in this case, which we will call a \emph{homogeneous generating
set}. With $\fpmapping:S^{n}\rightarrow M$ associated to the homogeneous
generating set $\{\mathfrak{g}_{1},\ldots,\mathfrak{g}_{n}\}$, we
define a grading on $S^{n}$ by setting $\deg(\mathfrak{e}_{i})$
as the degree of $\mathfrak{g}_{i}$ in $M$ and $\deg(s_{i})=i$
for $s_{i}\in S_{i}$ in the grading $\oplus S_{i}$ of $S$. Then
again, each $x\in M$ decomposes into a finite sum of elements of
pairwise distinct degrees, and we can talk about \emph{homogeneous
elements} of $S^{n}$ accordingly. If $M$ is finitely presented,
the generating set of $\ker\,\fpmapping$ can be chosen with homogeneous
elements as well. 

\section{The ZC-Representation Theorem}

\label{sec:representation_over_N}

\paragraph{Persistence modules and $R[t]$-modules.}

\emph{Persistence modules} are the major object of interest in the
theory of persistent homology. We motivate it with the following typical
example: Given a nested sequence of topological spaces indexed over
the integers 
\[
X_{0}\hookrightarrow X_{1}\hookrightarrow X_{2}\hookrightarrow X_{3}\hookrightarrow X_{4}\cdots ,
\]
then the inclusions maps $X_{i}\rightarrow X_{j}$ induce group homomorphisms
$\varphi_{i,j}:H_{\ast}(X_{i})\rightarrow H_{\ast}(X_{j})$. By functoriality
of homology, $\varphi_{i,i}$ is the identity and $\varphi_{i,j}$
is the composition of $\varphi_{k,k+1}$ for $i\leq k\leq j-1$. The
following definition captures these algebraic properties: 
\begin{definition}
Let $R$ be a ring with unity. A \emph{discrete algebraic
persistence module} ($\dapm$) is a tuple $\dapmname=\left(\left(M_{i}\right)_{i\in\mathbb{N}},\left(\varphi_{i,j}\right)_{i\leq j\in\mathbb{N}}\right)$,
such that $M_{i}$ is an $R$-module, $\varphi_{i,j}:M_{i}\rightarrow M_{j}$
is a module morphism, $\varphi_{i,i}=1_{M_{i}}$ and $\varphi_{i,k}=\varphi_{j,k}\circ\varphi_{i,j}$
for all $i\leq k\leq j$. 
\end{definition}
A $\dapm$ is completely specified by the modules and the morphisms
be\-tween consecutive modules, so it is usually just written as 
\begin{eqnarray}
\dapmname:\begin{xy}\xymatrix{M_{0}\ar[r]^{\varphi_{0}} & M_{1}\ar[r]^{\varphi_{1}} & \dots\ar[r]^{\varphi_{i-1}} & M_{i}\ar[r]^{\varphi_{i}} & M_{i+1}\ar[r]^{\varphi_{i+1}} & \dots}
\end{xy}\label{eqn:dapm}
\end{eqnarray}
where $\varphi_{i}:=\varphi_{i,i+1}$.

$\dapms$ over $R$ are closely related to graded $R[t]$-modules:
indeed, given a $\dapm$ $\dapmname$ as in $(\ref{eqn:dapm})$, we
can associate to it a graded $R[t]$-module by setting 
\begin{eqnarray}
\alpha(\dapmname):=\bigoplus_{i\in\N}M_{i}\label{eqn:alpha_objects}
\end{eqnarray}
where multiplication by $t$ is defined by $t\cdot m_{i}:=\varphi_{i}(m_{i})\in M_{i+1}$
for $m_{i}\in M_{i}$. Vice versa, an $R[t]$-module $\oplus_{i\in\N}M_{i}$
defines a $\dapm$ by 
\begin{eqnarray}
\beta\left(\bigoplus_{i\in\N}M_{i}\right):=M_{0}\stackrel{\varphi_{0}}{\rightarrow}M_{1}\stackrel{\varphi_{1}}{\rightarrow}M_{2}\rightarrow\ldots\label{eqn:beta_objects}
\end{eqnarray}
where the morphisms are just multiplication with $t$,
that is $\varphi_{i}(m_{i}):=t\cdot m_{i}$. 
\begin{definition}
For two $\dapm$s $\left(\left(M_{i}\right)_{i\in\mathbb{N}},\left(\varphi_{i,j}\right)_{i\leq j\in\mathbb{N}}\right)$,
$\left(\left(N_{i}\right)_{i\in\mathbb{N}},\left(\psi_{i,j}\right)_{i\leq j\in\mathbb{N}}\right)$,
a family $\xi_{*}=\left(\xi_{i}:M_{i}\rightarrow N_{i}\right)_{i\in\mathbb{N}}$
of module morphisms is called \emph{discrete algebraic persistence
module morphism} if $\psi_{i,j}\circ\xi_{i}=\xi_{j}\circ\varphi_{i,j}$.
Equivalently, the following diagram commutes:

\[
\begin{xy}\xymatrix{M_{0}\ar[r]^{\varphi_{0}}\ar[d]^{\xi_{0}} & \dots\ar[r]^{\varphi_{i-1}} & M_{i}\ar[r]^{\varphi_{i,}}\ar[d]^{\xi_{i}} & M_{i+1}\ar[d]^{\xi_{i+1}}\ar[r]^{\varphi_{i+1}} & \dots\\
N_{0}\ar[r]^{\psi_{0}} & \dots\ar[r]^{\psi_{i-1}} & N_{i}\ar[r]^{\psi_{i}} & N_{i+1}\ar[r]^{\psi_{i+1}} & \dots
}
\end{xy}
\]
With such morphisms, the class of all $\dapms$ over $R$ becomes a category,
which we call $\dampcat$. 
\end{definition}
\begin{lemma}
\label{lem:cat_equiv_general} The maps $\alpha$ and $\beta$ from
$(\ref{eqn:alpha_objects})$ and $(\ref{eqn:beta_objects})$ extend
to functors be\-tween $\dampcat$ and $\rtmodcat$ which form an
isomorphic pair of functors. In particular, the two categories are
isomorphic. 
\end{lemma}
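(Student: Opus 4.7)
The plan is to extend $\alpha$ and $\beta$ to functors by defining them on morphisms in the evident component-wise manner, verify functoriality, and then check that the two compositions are strictly equal to the respective identity functors (not merely naturally isomorphic), since the statement asserts an isomorphism of categories and not just an equivalence.

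First I would check that the object-level assignments are well-defined. For $\alpha(\dapmname) = \bigoplus_{i\in\N} M_i$, the prescription $t \cdot m_i := \varphi_i(m_i)$ on homogeneous elements, extended $R$-linearly, yields an honest graded $R[t]$-module: bi-additivity and $R$-associativity of the $t$-action follow from the fact that each $\varphi_i$ is an $R$-module morphism, and the grading condition $t \cdot M_i \subseteq M_{i+1}$ is immediate from the codomain of $\varphi_i$. Conversely, given a graded $R[t]$-module $\rtmodname = \bigoplus_i M_i$, each summand $M_i$ inherits an $R$-module structure from $R \subseteq R[t]$, and the grading condition forces multiplication by $t$ to restrict to $R$-module morphisms $\varphi_i : M_i \to M_{i+1}$; the remaining $\varphi_{i,j}$ are then forced by the axioms as iterated compositions, giving the $\dapm$ $\beta(\rtmodname)$.

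Next I would extend the functors to morphisms. For a $\dapm$ morphism $\xi_* = (\xi_i)_{i\in\N} : \dapmname \to \otherdapmname$, set $\alpha(\xi_*) := \bigoplus_i \xi_i$; this is graded by construction, and the compatibility square $\psi_i \circ \xi_i = \xi_{i+1} \circ \varphi_i$ expresses precisely compatibility with multiplication by $t$, which combined with $R$-linearity of each $\xi_i$ yields $R[t]$-linearity. For a graded $R[t]$-module morphism $f : \bigoplus_i M_i \to \bigoplus_i N_i$, the grading forces $f$ to restrict to $f_i := f|_{M_i} : M_i \to N_i$, and $R[t]$-linearity specialized to $t$ gives exactly the commutativity squares making $(f_i)$ a $\dapm$ morphism; set $\beta(f) := (f_i)_{i\in\N}$. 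Functoriality (preservation of identities and of composition) is then immediate in both directions, because both assignments act component-wise and the direct sum (respectively restriction to homogeneous parts) commutes with composition.

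Finally, the isomorphism of categories is a direct verification. Starting from $\rtmodname = \bigoplus_i M_i$, the $t$-action in $\alpha(\beta(\rtmodname))$ is by construction again multiplication by $t$ as in $\rtmodname$, so $\alpha \circ \beta$ is literally the identity functor on objects; on morphisms, $\bigoplus_i f|_{M_i} = f$ because $f$ is graded. The other composite $\beta \circ \alpha$ is analogous: the connecting maps in $\beta(\alpha(\dapmname))$ are multiplication by $t$, which is $\varphi_i$ by the definition of $\alpha$. The main (and rather mild) obstacle is bookkeeping: one has to track carefully what data each object literally carries, so that the two round-trips recover the original data on the nose rather than up to isomorphism. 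Since both constructions are inverse re-packagings of the same underlying family of $R$-modules and connecting homomorphisms, this strict equality holds.
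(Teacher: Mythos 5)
Your proposal is correct and follows essentially the same route as the paper: both define $\alpha$ on morphisms componentwise as $(m_i)\mapsto(\xi_i(m_i))$, define $\beta$ on morphisms by restricting a graded morphism to its homogeneous components, verify compatibility with the $t$-action in each direction, and observe that the two composites are strictly the identity functors. No gaps.
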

\begin{proof}
Given a $\dapm$ morphism 
\[
\begin{xy}\xymatrix{M_{0}\ar[r]^{\varphi_{0}}\ar[d]^{\xi_{0}} & \dots\ar[r]^{\varphi_{i-1}} & M_{i}\ar[r]^{\varphi_{i,}}\ar[d]^{\xi_{i}} & M_{i+1}\ar[d]^{\xi_{i+1}}\ar[r]^{\varphi_{i+1}} & \dots\\
N_{0}\ar[r]^{\psi_{0}} & \dots\ar[r]^{\psi_{i-1}} & N_{i}\ar[r]^{\psi_{i}} & N_{i+1}\ar[r]^{\psi_{i+1}} & \dots
}
\end{xy}
\]
be\-tween two $\dapms$ $\dapmname$ and $\otherdapmname$, we define
\[
\alpha(\xi_{\ast}):\bigoplus_{i\in\mathbb{N}}M_{i}\rightarrow\bigoplus_{i\in\mathbb{N}}N_{i},(m_{i})_{i\in\mathbb{N}}\mapsto(\xi_{i}(m_{i}))_{i\in\mathbb{N}}.
\]

It is straight-forward to check that $\alpha(\xi_{\ast})$ is a well-defined
morphism be\-tween $\alpha(\dapmname)$ and $\alpha(\mathcal{N})$ 
and that $\alpha$ has the functorial properties~-- see Appendix~\ref{appendix_a}.

Vice versa, a morphism 
\[
\eta:\bigoplus_{i\in\mathbb{N}}M_{i}\rightarrow\bigoplus_{i\in\mathbb{N}}N_{i}
\]
in $\rtmodcat$ induces a homomorphism $\eta_{i}:M_{i}\rightarrow N_{i}$
for each $i\in\mathbb{N}$, and these induced maps are compatible
with multiplication with $t$. Hence, the diagram 
\[
\begin{xy}\xymatrix{M_{0}\ar[r]^{t}\ar[d]^{\eta_{0}} & \dots\ar[r]^{t} & M_{i}\ar[r]^{t}\ar[d]^{\eta_{i}} & M_{i+1}\ar[d]^{\eta_{i+1}}\ar[r]^{t} & \dots\\
N_{0}\ar[r]^{t} & \dots\ar[r]^{t} & N_{i}\ar[r]^{t} & N_{i+1}\ar[r]^{t} & \dots
}
\end{xy}
\]
commutes and so, setting $\beta(\eta):=(\eta_{0},\eta_{1},\ldots)$
yields a $\dapm$ morphism be\-tween $\beta(\oplus_{i\in\mathbb{N}}M_{i})$
and $\beta(\oplus_{i\in\mathbb{N}}N_{i})$. Again, we defer the 
proof of functoriality of $\beta$ to Appendix~\ref{appendix_a}.

Finally, the construction immediately implies that $\alpha\circ\beta$
equals the identity functor on $\rtmodcat$ and $\beta\circ\alpha$
equals the identity functor on $\dampcat$. 
\end{proof}

\paragraph{Finiteness conditions.}

In the context of computation and classification, it is natural to
impose some finiteness condition on persistence modules, yielding
a full subcategory of $\dampcat$. Restricting the functor $\alpha$
from Lemma~\ref{lem:cat_equiv_general} to this subcategory yields
a corresponding subcategory of $\rtmodcat$. But does the correspondence
established above also carry over the finiteness condition in an appropriate
way? This is the question we study in this subsection. 
\begin{definition}
A $\dapm$ $\dapmname=\left(\left(M_{i}\right)_{i\in\mathbb{N}},\left(\varphi_{i,j}\right)_{i\leq j\in\mathbb{N}}\right)$
is \emph{of finite type} if there is a $D\in\mathbb{N}$ such that
for all $D\leq i\leq j$ the map $\varphi_{i,j}$ is an isomorphism.
$\dapmname$ is called \emph{of finitely presented (generated) type}
if it is of finite type and $M_{i}$ is finitely presented (generated) as an $R$-module 
for all $i\in\mathbb{N}$. 
\end{definition}
We will show next that $\dapms$ of finitely presented type over $R$ are isomorphic
to finitely presented graded $R[t]$-modules using the functors $\alpha$
and $\beta$ above. 
\begin{lemma}
\label{lem:alpha_lemma} If a $\dapm$ $\dapmname=\left(\left(M_{i}\right)_{i\in\mathbb{N}},\left(\varphi_{i,j}\right)_{i\leq j\in\mathbb{N}}\right)$
is of finitely presented type, $\alpha(\dapmname)$ is finitely presented. 
\end{lemma}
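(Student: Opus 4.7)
Since $\dapmname$ is of finitely presented type, fix $D \in \N$ so that $\varphi_{i,j}$ is an isomorphism for all $D \leq i \leq j$, and fix, for each $i \in \{0, \ldots, D\}$, a finite presentation $R^{a_i} \xrightarrow{\partial_i} R^{b_i} \xrightarrow{\pi_i} M_i \to 0$; write $g_j^{(i)} := \pi_i(e_j^{(i)})$ for the induced generators of $M_i$. The plan is to build an explicit finite presentation of $\alpha(\dapmname)$ using only the data of these presentations together with the transition maps $\varphi_0, \ldots, \varphi_{D-1}$.

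First I would establish finite generation. Set $F := \bigoplus_{i=0}^{D} R[t]^{b_i}$ with the grading placing the $j$-th standard basis element of the $i$-th summand (also denoted $e_j^{(i)}$) in degree $i$, and let $\fpmapping : F \to \alpha(\dapmname)$ be the graded $R[t]$-morphism sending $e_j^{(i)}$ to $g_j^{(i)}$. Surjectivity in degree $n \leq D$ is immediate. For $n > D$, surjectivity uses the hypothesis that $\varphi_{D,n}$ is an isomorphism: every element of $M_n$ has the form $t^{n-D} \cdot m'$ for some $m' \in M_D$, and $m'$ is an $R$-combination of the $g_j^{(D)}$, so it lies in the image of $\fpmapping$. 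Hence $\alpha(\dapmname)$ is finitely generated.

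For the harder half, I would show $K := \ker \fpmapping$ is finitely generated by exhibiting an explicit finite homogeneous generating set consisting of two families: (a) the internal relations $\partial_i(\varepsilon_k) \in R^{b_i} \subset F$ coming from the presentation of each $M_i$, for $i \in \{0, \ldots, D\}$ and $k \in \{1, \ldots, a_i\}$ (where $\varepsilon_k$ denotes the $k$-th standard basis vector of $R^{a_i}$), and (b) the transition relations $t \cdot e_j^{(i)} - \tau_j^{(i)}$ for $i \in \{0, \ldots, D-1\}$ and $j \in \{1, \ldots, b_i\}$, where $\tau_j^{(i)} \in R^{b_{i+1}} \subset F$ is any lift under $\pi_{i+1}$ of $\varphi_i(g_j^{(i)}) \in M_{i+1}$. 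Both families are finite and visibly lie in $K$.

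The crux is verifying that (a) and (b) span $K$; this is the step I expect to be the main obstacle. My strategy is a normal-form reduction: a homogeneous $\sigma \in K$ of degree $n$ decomposes as $\sum_{k=0}^{\min(n,D)} \sum_j r_{k,j} \cdot t^{n-k} \cdot e_j^{(k)}$, and the transition relations (b) allow replacing each level-$k$ summand (for $k < \min(n, D)$) by a level-$(k{+}1)$ summand, so iterating reduces $\sigma$ modulo the $R[t]$-span of (b) to an element $\sigma' = \sum_j r'_j \cdot t^{n-\ell} \cdot e_j^{(\ell)}$ concentrated at the single level $\ell := \min(n, D)$. Applying $\fpmapping$ to $\sigma'$ yields $\varphi_{\ell, n}\bigl(\sum_j r'_j g_j^{(\ell)}\bigr) = 0$; since $\varphi_{\ell, n}$ is an isomorphism (trivially if $\ell = n \leq D$, by the finite-type hypothesis if $\ell = D < n$), one gets $\sum_j r'_j g_j^{(\ell)} = 0$ in $M_\ell$, hence $\sum_j r'_j e_j^{(\ell)} \in \ker \pi_\ell = \operatorname{im} \partial_\ell$, which is an $R$-combination of the internal relations in (a). Multiplying by $t^{n-\ell}$ then exhibits $\sigma$ as an $R[t]$-combination of (a) and (b). The delicate point is to organize the iterated reduction cleanly and to recognize that the finite-type hypothesis is precisely what lets a vanishing relation at degree $n$ be pulled back to a relation visible in the fixed presentation of $M_\ell$.
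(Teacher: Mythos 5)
Your proposal is correct and follows essentially the same route as the paper's proof: finite generation via the isomorphisms beyond degree $D$, then a finite homogeneous generating set for the kernel consisting of the internal relations of each $M_i$ together with transition relations, verified by reducing a homogeneous kernel element to a single level where the fixed presentation of $M_\ell$ applies. The only differences are cosmetic — you use consecutive transition relations $t\,e_j^{(i)}-\tau_j^{(i)}$ and a single unified reduction to level $\min(n,D)$, where the paper takes relations for all pairs $i<j\leq D$ and splits the verification into three cases.
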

\begin{proof}
Let $D\in\N$ be such that for all $D\leq i\leq j$, $\varphi_{i,j}:M_{i}\rightarrow M_{j}$
is an isomorphism. Let $\mathfrak{G}{}_{i}$ be a generating set for
$M_{i}$. We claim that $\bigcup_{i=1}^{D}\mathfrak{G}_{i}$ is a
generating set for $\alpha(\dapmname)$. To see that, it suffices
to show that every homogeneous element in $\alpha(\dapmname)=\oplus M_{i}$
is generated by the union of the $G_{i}$. So, fix $k\in\N$ and $m_{k}$
homogeneous of degree $k$. If $k\leq D$, then $m_{k}$ is generated
by the elements of $\mathfrak{G}_{k}$ by construction. If $k>D$,
we show that $m_{k}$ is generated by $\mathfrak{G}_{D}$. For that,
let $m_{D}:=\varphi_{D,k}^{-1}(m_{k})$ which exists because $\varphi_{D,k}$
is isomorphism. $m_{D}$ is generated by $\mathfrak{G}_{D}$, hence
$m_{k}$ is generated by $\varphi_{D,k}(\mathfrak{G}_{D})$. By construction
of $\alpha$, $\varphi_{D,k}(\mathfrak{G}_{D})=t^{k-D}\mathfrak{G}_{D}$
and since $t^{k-D}$ is a ring element in $R[t]$, $m_{k}$ is generated
by $\mathfrak{G}_{D}$. This shows that $\alpha(\dapmname)$ is finitely
generated.

It remains to show that $\alpha(\dapmname)$ is also finitely presented.
Let $\fpmapping_{i}:R^{n_{i}}\rightarrow M_{i}$
be the generating surjective map that corresponds to $\mathfrak{G}_{i}$.
Writing $n=\sum_{i=1}^{D}n_{i}$, there is a map $\fpmapping:R[t]^{n}\rightarrow\alpha(\dapmname)$
that corresponds to the the generating set $\bigcup_{i=1}^{D}\mathfrak{G}_{i}$.
If $\mathfrak{g}_{i}$ is a generator, we will use the notation $\mathfrak{e}_{i}$
to denote the corresponding generator of $R[t]^{n}$.

We now define a finite set of elements of $\ker\,\fpmapping$. First
of all, let $\mathfrak{Z}_{i}$ be the generating set of $\ker\,\fpmapping_{i}$
for $0\leq i\leq D$. Clearly, all elements of $\mathfrak{Z}_{i}$
are also in $\ker\,\fpmapping$. Moreover, for any $0\leq i<j\leq D$,
and any generator $\mathfrak{g}_{i}$ in $\mathfrak{G}_{i}$ with
$\varphi_{i,j}(\mathfrak{g}_{i})\neq0$, we can write 
\[
\varphi_{i,j}(\mathfrak{g}_{i})=\sum_{v=0}^{n_{j}}\lambda_{v}\mathfrak{g}_{j}^{(v)}
\]
where $\lambda_{v}\in R$ and $\mathfrak{G}_{j}=\{\mathfrak{g}_{j}^{(0)},\ldots,\mathfrak{g}_{j}^{(n_{j})}\}$.
In that case, the corresponding element 
\[
t^{j-i}\mathfrak{e}_{i}-\sum_{\nu=0}^{n_{j}}\lambda_{v}\mathfrak{e}_{j}^{(v)}
\]
is in $\ker\,\fpmapping$. We let $\mathfrak{Z}_{i,j}$ denote the
(finite) set obtained by picking one element as above for each $\mathfrak{g}_{i}$
with $\varphi_{i,j}(\mathfrak{g}_{i})\neq0$. We claim that $\mathfrak{Z}:=\bigcup_{i=0}^{D}\mathfrak{Z}_{i}\cup\bigcup_{0\leq i<j\leq D}\mathfrak{Z}_{i,j}$
generates $\ker\,\fpmapping$:

Fix an element in $x\in\ker\,\fpmapping$, which is of the form 
\[
x=\sum_{\ell}\lambda_{\ell}\mathfrak{e}_{\ell}
\]
with $\lambda_{\ell}\in R[t]$ and $\mathfrak{e}_{\ell}$ a generator
of $R[t]^{n}$. We can assume that $x$ is homogeneous of some degree
$k$. We first consider the case that $k\leq D$ and all $\lambda_{\ell}$
are of degree $0$. Then, all $\mathfrak{e}_{\ell}$ that appear in
$x$ are of the same degree, and hence, their images under $\fpmapping$
are generators of $M_{k}$. It follows that $x$ is generated by the
set $\mathfrak{Z}_{k}$.

Next, we consider the case that $k\leq D$, and some $\lambda_{\ell}$
is of positive degree. Because $x$ is homogeneous, $\lambda_{\ell}$
is then of the form $r_{\ell}t^{d_{\ell}}$ for some $r_{\ell}\in R$
and $d_{\ell}>0$. Since the degree of $\mathfrak{e}_{\ell}$ is $k-d_{\ell}$,
there is an element $\mathfrak{z}_{\ell}$ in $\mathfrak{Z}_{k-d_{\ell},k}$
of the form $\mathfrak{z}_{\ell}=t^{d_{\ell}}\mathfrak{e}_{\ell}-\sum_{v=0}^{n_{\ell}}\tilde{\lambda}_{v}\mathfrak{e}_{k}^{(v)}$
with all $\mathfrak{e}_{k}^{(v)}$ of degree $k$ and each $\tilde{\lambda}_{v}\in R$.
Then in $x-r_{\ell}\mathfrak{z}_{\ell}$ the coefficient of $\mathfrak{e}_{\ell}$
in $x$ is $0$, and we only introduce summands with coefficients
of degree $0$ in $t$.

Iterating this construction for each summand with coefficient of positive
degree, we get an element $x'=x-\sum_{w}r_{w}\mathfrak{z}_{w}$ with
$r_{w}\in R$ and $\mathfrak{z}_{w}$ elements of $\mathfrak{Z}$,
and $x'$ only having coefficient of degree $0$ in $t$. This yields
to $x=x'+\sum_{w}r_{w}\mathfrak{z}_{w}$. Using the first part, it
follows that $x$ is generated by $\mathfrak{Z}$.

Finally, we consider the case that $k>D$. In that case, each $\lambda_{\ell}$
is of degree at least $k-D$, because the maximal degree of $\mathfrak{e}_{\ell}$
is $D$. So, $x=t^{k-D}x'$ with $x'$ homogeneous of degree $D$.
Since $0=\fpmapping(x)=t^{k-D}\fpmapping(x')$, $x'\in\ker\,\fpmapping$
as well. By the second part, $x'$ is hence generated by $\mathfrak{Z}$,
and so is $x$.  
\end{proof}
For the next two lemmas, we fix a finitely presented graded $R[t]$-module
$\rtmodname:=\oplus_{i\in\mathbb{N}}M_{i}$ with a map 
\[
R\left[t\right]^{n}\overset{\fpmapping}{\rightarrow}\rtmodname
\]
such that $\ker\,\fpmapping$ is finitely generated. Moreover, we
let $\mathfrak{G}:=\{\mathfrak{g}^{(1)},\ldots,\mathfrak{g}^{(n)}\}$
denote generators of $\rtmodname$ and $\mathfrak{Z}:=\{\mathfrak{z}^{(1)},\ldots,\mathfrak{z}^{(m)}\}$
denote a generating set of $\ker\,\fpmapping$. We assume that each
$\mathfrak{g}^{(i)}$ and each $\mathfrak{z}^{(j)}$ is homogeneous
(with respect to the grading of the corresponding module), and we
let $\deg(\mathfrak{g}^{(i)})$, $\deg(\mathfrak{z}^{(j)})$ denote
the degrees. We further assume that $\mathfrak{G}$ and $\mathfrak{Z}$
are sorted by degrees in non-decreasing order. 
\begin{lemma}
\label{lem:comp_are_finitely_presented} Each $M_{i}$ is finitely
presented as an $R$-module. 
\end{lemma}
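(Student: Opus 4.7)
The plan is to exploit the grading directly: each homogeneous piece of the surjection $\mu\colon R[t]^n \to \mathbf{M}$ restricts to a surjection of $R$-modules onto $M_i$, and the homogeneity of the generating set of $\ker\mu$ will force the degree-$i$ part of this kernel to be finitely generated as well.

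First, I would describe the graded structure of $R[t]^n$ explicitly. Using the grading convention from the preceding paragraph, the degree-$i$ component $(R[t]^n)_i$ is the free $R$-module with basis $\{t^{i-\deg(\mathfrak{g}^{(k)})}\mathfrak{e}^{(k)} : \deg(\mathfrak{g}^{(k)})\leq i\}$, which is finite because $\mathfrak{G}$ is. Since $\mu$ is a graded morphism, it restricts to an $R$-module morphism $\mu_i\colon (R[t]^n)_i \to M_i$, and since $\mu$ is surjective and $M_i$ consists precisely of the degree-$i$ elements of $\mathbf{M}$, $\mu_i$ is also surjective. This already gives that $M_i$ is finitely generated as an $R$-module, with generators $\{t^{i-\deg(\mathfrak{g}^{(k)})}\mathfrak{g}^{(k)} : \deg(\mathfrak{g}^{(k)})\leq i\}$.

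The main step is to show $\ker\mu_i = (\ker\mu)_i$ is finitely generated as an $R$-module. The inclusion $\ker\mu_i \supseteq (\ker\mu)_i$ is clear, and the reverse follows because any element of $\ker\mu_i$ is homogeneous of degree $i$ and lies in $\ker\mu$. Now, since $\ker\mu$ is generated as an $R[t]$-module by the homogeneous set $\mathfrak{Z}$, an arbitrary element of $\ker\mu$ has the form $\sum_j p_j(t)\mathfrak{z}^{(j)}$ with $p_j(t)\in R[t]$. Decomposing each $p_j$ into homogeneous pieces and extracting the degree-$i$ part shows that $(\ker\mu)_i$ is generated as an $R$-module by the finite set
\[
\{t^{i-\deg(\mathfrak{z}^{(j)})}\mathfrak{z}^{(j)} : \deg(\mathfrak{z}^{(j)})\leq i\}.
\]
Consequently, we obtain an exact sequence of $R$-modules $R^{m_i}\to R^{n_i}\to M_i\to 0$, proving that $M_i$ is finitely presented.

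The potential obstacle is the step verifying that the degree-$i$ component of a graded submodule of $R[t]^n$ generated by homogeneous elements is generated as an $R$-module by the expected finite list. This is really a bookkeeping argument about the polynomial grading: any homogeneous contribution of $p_j(t)\mathfrak{z}^{(j)}$ to degree $i$ must come from the monomial $r_j t^{i-\deg(\mathfrak{z}^{(j)})}$ of $p_j$, with $r_j \in R$, and no other summand of $p_j$ can contribute to degree $i$. Once this is spelled out, the conclusion is immediate, and no noetherianity or Artin--Rees-type input is required.
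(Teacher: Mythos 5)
Your proposal is correct and follows essentially the same route as the paper: the paper's map $\fpmapping_i\colon R^{n_i}\to M_i$ sending $\mathfrak{e}_i^{(j)}\mapsto t^{i-d_j}\mathfrak{g}^{(j)}$ is exactly your identification of $R^{n_i}$ with the degree-$i$ component of $R[t]^n$, and its kernel generators $\mathfrak{z}_i^{(j)}$ are your elements $t^{i-\deg(\mathfrak{z}^{(j)})}\mathfrak{z}^{(j)}$ read in that basis. The ``bookkeeping'' step you flag---that only the monomial $r_jt^{i-\deg(\mathfrak{z}^{(j)})}$ of $p_j$ can contribute to degree $i$---is precisely the coefficient comparison carried out at the end of the paper's proof.
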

\begin{proof}
We argue first that $M_{i}$ is finitely generated. Set $d_{j}:=\deg(\mathfrak{g}^{(j)})$
for $1\leq j\leq n$. Let $n_{i}$ denote the number of elements in
$\mathfrak{G}$ with degree at most $i$. Define the map $\fpmapping_{i}:R^{n_{i}}\rightarrow M_{i}$,
by mapping the $j$th generator $\mathfrak{e}_{i}^{(j)}$ of $R^{n_{i}}$
to the element $t^{i-d_{j}}\mathfrak{g}^{(j)}$. It is then straight-forward
to see that the map $\fpmapping_{i}$ is surjective, proving that
$M_{i}$ is finitely generated.

We show that $\ker\,\fpmapping_{i}$ is finitely generated as well.
Let $\mathfrak{e}^{(1)},\ldots,\mathfrak{e}^{(n)}$ be the generators
of $R[t]^{n}$ mapping to $\mathfrak{g}^{(1)},\ldots,\mathfrak{g}^{(n)}$
under $\fpmapping$. Let $m_{i}$ denote the number of elements in
$\mathfrak{Z}$ such that $d_{j}':=\deg(\mathfrak{z}^{(j)})\leq i$.
For every $\mathfrak{z}^{(j)}$ with $1\leq j\leq m_{i}$, consider
$t^{i-d_{j}'}\mathfrak{z}^{(j)}$, which can be written as 
\[
t^{i-d_{j}'}\mathfrak{z}^{(j)}=\sum_{k=1}^{n_{i}}r_{k}t^{i-d_{k}}\mathfrak{e}^{(k)}
\]
with $r_{k}\in R$. Now, define 
\[
\mathfrak{z}_{i}^{(j)}:=\sum_{k=1}^{n_{i}}r_{k}\mathfrak{e}_{i}^{(k)}
\]
and define $\mathfrak{Z}_{i}:=\{\mathfrak{z}_{i}^{(j)}\mid1\leq i\leq m_{i}\}$.
We claim that $\mathfrak{Z}_{i}$ generates $\ker\,\fpmapping{}_{i}$.
First of all, it is clear that $\fpmapping_{i}(\mathfrak{z}_{i}^{(j)})=\fpmapping(\mathfrak{z}^{(j)})=0$.
Now fix $x\in\ker\,\fpmapping_{i}$ arbitrarily. Then, $x$ is a linear
combination of elements in $\{\mathfrak{e}_{i}^{(1)},\ldots,\mathfrak{e}_{i}^{(n_{i})}\}$
with coefficients in $R$. Replacing $\mathfrak{e}_{i}^{(j)}$ with
$t^{i-d_{j}}\mathfrak{e}^{(j)}$, we obtain $x'\in R[t]^{n}$ homogeneous
of degree $i$. By assumption, we can write $x'$ as linear combination
of elements in $\mathfrak{Z}$, that~is, 
\[
x'=\sum_{k=1}^{m_{i}}r'_{k}t^{i-d_{k}'}\mathfrak{z}^{(k)}
\]
with $r'_{k}\in R$. Then, it holds that 
\[
x=\sum_{k=1}^{m_{i}}r'_{k}\mathfrak{z}_{i}^{(k)},
\]
which follows simply by comparing coefficients: let $j\in\{1,\ldots,n_{i}\}$
and let $c_{j}\in R$ be the coefficient of $\mathfrak{e}_{i}^{(j)}$
in $x$. Let $c'_{j}$ be the coefficient of $\mathfrak{e}_{i}^{(j)}$
in the sum $\sum_{k=1}^{m_{i}}r'_{k}\mathfrak{z}_{i}^{(k)}$, expanding
each $\mathfrak{z}_{i}^{(k)}$ by its linear combination as above.
Then by construction, $c_{j}$ is the coefficient of $t^{i-d_{j}}\mathfrak{e}^{(j)}$
in $x'$, and $c'_{j}$ is the coefficient of $t^{i-d_{j}}\mathfrak{e}^{(j)}$
in the sum $\sum_{k=1}^{m_{i}}r'_{k}t^{i-d_{k}'}\mathfrak{z}^{(k)}$.
Since this sum equals $x'$, it follows that $c_{j}=c'_{j}$. Since
$x$ was chosen arbitrary from $\ker\,\fpmapping_{i}$, it follows
$\mathfrak{Z}_{i}$ generates the kernel. 
\end{proof}
\begin{lemma}
\label{lem:beta_lemma} $\beta(\rtmodname)$ is of finite type. In
particular, it is of finitely presented type with Lemma~\ref{lem:comp_are_finitely_presented}. 
\end{lemma}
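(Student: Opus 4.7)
The plan is to set $D := \max\{\deg(\mathfrak{g}^{(j)}),\deg(\mathfrak{z}^{(k)})\mid 1\le j\le n, 1\le k\le m\}$ and show that for every $i\ge D$ the multiplication map $t:M_i\rightarrow M_{i+1}$ is a bijection. Combined with Lemma~\ref{lem:comp_are_finitely_presented}, this gives that $\beta(\rtmodname)$ is of finitely presented type.

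For surjectivity when $i\ge D$, I would take an arbitrary element $y\in M_{i+1}$, which is homogeneous of degree $i+1$ in $\rtmodname$. Since $\mathfrak{G}$ generates $\rtmodname$ and every $\mathfrak{g}^{(j)}$ has degree $d_j\le D\le i<i+1$, the element $y$ can be written as $\sum_j a_j t^{i+1-d_j}\mathfrak{g}^{(j)}$ with $a_j\in R$. Factoring out a single $t$ yields $y=t\cdot x$ where $x=\sum_j a_jt^{i-d_j}\mathfrak{g}^{(j)}\in M_i$; this uses $i-d_j\ge 0$, which holds because $i\ge D\ge d_j$.

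The main obstacle is injectivity. Suppose $x\in M_i$ with $tx=0$ and $i\ge D$. Writing $x=\sum_j a_jt^{i-d_j}\mathfrak{g}^{(j)}$ as above, I lift to the free module by setting $\tilde{x}:=\sum_j a_jt^{i-d_j}\mathfrak{e}^{(j)}\in R[t]^n$, so that $\fpmapping(\tilde{x})=x$ and $\fpmapping(t\tilde{x})=tx=0$; hence $t\tilde{x}\in\ker\,\fpmapping$. Since $\mathfrak{Z}$ generates $\ker\,\fpmapping$ and $t\tilde{x}$ is homogeneous of degree $i+1$, I can write $t\tilde{x}=\sum_k b_kt^{i+1-d'_k}\mathfrak{z}^{(k)}$ with $b_k\in R$, where each exponent $i+1-d'_k\ge 1$ because $d'_k\le D\le i$. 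Factoring a $t$ out of the right-hand side gives $t\tilde{x}=t\cdot\bigl(\sum_k b_kt^{i-d'_k}\mathfrak{z}^{(k)}\bigr)$. Because $t$ is not a zero divisor on the free module $R[t]^n$, multiplication by $t$ is injective on $R[t]^n$, and therefore $\tilde{x}=\sum_k b_kt^{i-d'_k}\mathfrak{z}^{(k)}\in\ker\,\fpmapping$. Applying $\fpmapping$ yields $x=0$, as required.

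The only subtlety is to make sure the non-zero-divisor argument for $t$ on $R[t]^n$ is valid over an arbitrary (possibly non-commutative) ring $R$ with unity; this holds because $R[t]$ consists of polynomials with coefficients in $R$ where $t$ commutes with $R$ by construction, so leading-term comparison shows that $t\cdot p=0$ in $R[t]$ forces $p=0$, and the argument extends componentwise to $R[t]^n$. With finite type established, the finitely presented type conclusion then follows immediately from Lemma~\ref{lem:comp_are_finitely_presented}, since each $M_i$ is finitely presented as an $R$-module.
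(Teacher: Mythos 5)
Your proof is correct and follows essentially the same route as the paper's: the same choice of $D$, the same factoring-out-$t$ argument for surjectivity, and the same injectivity argument via lifting to $R[t]^n$, expressing $t\tilde{x}$ in terms of the $\mathfrak{z}^{(k)}$ with a $t$ factored out, and cancelling $t$ using freeness of $R[t]^n$. Your explicit justification that $t$ is a non-zero-divisor on $R[t]^n$ over a possibly non-commutative $R$ is a welcome extra detail that the paper leaves implicit.
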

\begin{proof}
Fix 
\[
D:=max\{\deg(\mathfrak{g}^{(j)}),\deg(\mathfrak{z}^{(k)})\mid1\leq j\leq n,1\leq k\leq m\}.
\]
It suffices to show that multiplication by $t$ induces an isomorphism
$M_{i}\rightarrow M_{i+1}$ for every $i\geq D$. Let $y\in M_{i+1}$.
Then, $y=\sum_{j=1}^{n}\lambda_{j}\mathfrak{g}^{(j)}$ with $\lambda_{j}\in R[t]$
of degree at least $1$. Hence, $y=ty'$ with $y'\in M_{i}$, showing
that multiplication with $t$ gives a surjective map.

For injectivity, let $y\in M_{i}$ such that $ty=0$. Let $x\in R[t]^{n}$
be such that $\fpmapping(x)=y$. Then $\fpmapping(tx)=ty=0$. Hence,
$tx$ can be written as 
\[
tx=\sum_{j=0}^{m}\tilde{\lambda}_{j}\mathfrak{z}^{(j)}
\]
where each non-trivial $\lambda_{j}$ is a polynomial of degree at
least one, because each $\mathfrak{z}^{(j)}$ is of degree at most
$D$ and $tx$ is of degree at least $D+1$. Therefore, there is also
a decomposition 
\[
tx=\sum_{j=0}^{m}t\lambda_{j}\mathfrak{z}^{(j)}=t\sum_{j=0}^{m}\lambda_{j}\mathfrak{z}^{(j)}.
\]
Since $R[t]^{n}$ is free, this implies that $x$ equals the sum on
the right hand side, implying that $x\in\ker\,\fpmapping$, so $y=0$. 
\end{proof}

\paragraph{The Representation Theorem.}

The preceding lemmas of this section immediately reply the following
version of the Representation Theorem. 
\begin{theorem}
\label{thm:representation_theorem_naturals}
Let $R$ be a ring with unity. The category of finitely presented graded $R[t]$-modules is isomorphic
to the category of discrete algebraic persistence modules of finitely
presented type. 
\end{theorem}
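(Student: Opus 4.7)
The plan is to assemble Theorem~\ref{thm:representation_theorem_naturals} as a direct corollary of Lemmas~\ref{lem:cat_equiv_general},~\ref{lem:alpha_lemma}, and~\ref{lem:beta_lemma}. Since Lemma~\ref{lem:cat_equiv_general} already establishes that $(\alpha,\beta)$ is an isomorphic pair of functors between the full categories $\dampcat$ and $\rtmodcat$, the only thing that remains is to verify that both functors restrict appropriately to the finiteness subcategories, and that the restricted pair is still mutually inverse.

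Concretely, I would proceed in three short steps. First, I would introduce the two full subcategories in question: let $\mathbf{C}\subseteq\dampcat$ denote the full subcategory of $\dapms$ of finitely presented type, and let $\mathbf{D}\subseteq\rtmodcat$ denote the full subcategory of finitely presented graded $R[t]$-modules. Morphisms in both subcategories are just the morphisms of the ambient categories, so no new checks on morphisms are required. Second, I would invoke Lemma~\ref{lem:alpha_lemma} to conclude that for every object $\dapmname\in\mathbf{C}$, $\alpha(\dapmname)\in\mathbf{D}$, so that the restriction $\alpha|_{\mathbf{C}}\colon \mathbf{C}\to\mathbf{D}$ is a well-defined functor; symmetrically, Lemma~\ref{lem:beta_lemma} guarantees that for every $\rtmodname\in\mathbf{D}$, $\beta(\rtmodname)\in\mathbf{C}$, so that $\beta|_{\mathbf{D}}\colon \mathbf{D}\to\mathbf{C}$ is a well-defined functor as well.

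Third, I would observe that by Lemma~\ref{lem:cat_equiv_general} we already have $\alpha\circ\beta=\mathrm{id}_{\rtmodcat}$ and $\beta\circ\alpha=\mathrm{id}_{\dampcat}$ on the ambient categories. Restricting these equalities to $\mathbf{D}$ and $\mathbf{C}$ respectively gives $\alpha|_{\mathbf{C}}\circ\beta|_{\mathbf{D}}=\mathrm{id}_{\mathbf{D}}$ and $\beta|_{\mathbf{D}}\circ\alpha|_{\mathbf{C}}=\mathrm{id}_{\mathbf{C}}$, so $(\alpha|_{\mathbf{C}},\beta|_{\mathbf{D}})$ is an isomorphic pair witnessing $\mathbf{C}\cong\mathbf{D}$, which is exactly the claim.

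There is essentially no new obstacle at this stage: the genuine technical work lies in the preceding two lemmas, namely producing a finite homogeneous set of relations for $\alpha(\dapmname)$ out of the stabilization index $D$ (Lemma~\ref{lem:alpha_lemma}) and producing the stabilization index for $\beta(\rtmodname)$ out of the maximal degree of a generator or relation (Lemma~\ref{lem:beta_lemma}). Once these are available, the proof of the theorem itself is a matter of noting that restricting an isomorphism of categories to mutually corresponding full subcategories still yields an isomorphism, a purely formal consequence.
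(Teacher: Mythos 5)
Your proposal is correct and follows essentially the same route as the paper: both proofs simply observe that Lemmas~\ref{lem:alpha_lemma} and~\ref{lem:beta_lemma} (the latter resting on Lemma~\ref{lem:comp_are_finitely_presented}) show that $\alpha$ and $\beta$ restrict to the two finitely presented subcategories, and that the identities $\alpha\circ\beta=\mathrm{id}$ and $\beta\circ\alpha=\mathrm{id}$ from Lemma~\ref{lem:cat_equiv_general} persist under restriction. Your write-up is, if anything, slightly more explicit than the paper's about why restricting an isomorphic pair to corresponding full subcategories again yields an isomorphism.
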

\begin{proof}
The two categories are subcategories of $\rtmodcat$ and $\dampcat$,
respectively. Since $\alpha$ and $\beta$, restricted to these subcategories,
map a $\dapm$ of finitely presented type to a finitely presented graded 
$R[t]$-module (Lemma~\ref{lem:alpha_lemma}) and vice versa
(Lemma~\ref{lem:comp_are_finitely_presented} and Lemma~\ref{lem:beta_lemma}),
these categories are isomorphic. 
\end{proof}
What happens if we replace ``finitely presented'' with the weaker
condition ``finitely generated'' throughout? The proof of Lemma~\ref{lem:alpha_lemma}
shows that if $\dapmname$ is of finitely generated type, $\alpha(\dapmname)$
is finitely generated. Vice versa, if a graded $R[t]$-module $\rtmodname=\oplus_{i\in\mathbb{N}}M_{i}$
is finitely generated, each $M_{i}$ is finitely generated, too. However,
it does not follow in general that $\beta(\rtmodname)$ is of finite
type, as the example from the introduction shows. This problem disappears
with additional requirements on the ring: 
\begin{corollary}
If $R$ is commutative and Noetherian, the category of finitely generated graded $R[t]$-modules
is isomorphic to the category of discrete algebraic persistence modules
of finitely generated type. 
\end{corollary}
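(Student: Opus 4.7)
The plan is to deduce this corollary immediately from Theorem~\ref{thm:representation_theorem_naturals} by showing that, under the hypothesis that $R$ is commutative Noetherian, the two full subcategories appearing in the corollary coincide with those appearing in the theorem. This way no new construction is needed; the isomorphism of functors $(\alpha,\beta)$ established in Lemma~\ref{lem:cat_equiv_general} and refined in Theorem~\ref{thm:representation_theorem_naturals} restricts directly.

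On the persistence module side, I would argue that a $\dapm$ is of finitely generated type if and only if it is of finitely presented type. By definition, both conditions require finite type, and they differ only in whether each $M_i$ is required to be finitely generated or finitely presented. Since $R$ is Noetherian, the first part of Lemma~\ref{lem:noethersch} gives that every finitely generated $R$-module is finitely presented, so the two conditions are equivalent. Hence the full subcategory of $\dapms$ of finitely generated type coincides with the full subcategory of $\dapms$ of finitely presented type.

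On the graded module side, I would first invoke the second part of Lemma~\ref{lem:noethersch} (Hilbert's Basis Theorem), which requires commutativity of $R$, to conclude that $R[t]$ is itself Noetherian. Applying the first part of Lemma~\ref{lem:noethersch} now to $R[t]$, every finitely generated $R[t]$-module is finitely presented. For a finitely generated \emph{graded} $R[t]$-module $\rtmodname$, one may pick a finite homogeneous generating set, and, as noted at the end of Section~\ref{sec:basic_notions}, a homogeneous generating set of the kernel of the associated surjection $R[t]^n\to\rtmodname$ exists and is finite by Noetherianity. Hence the full subcategories of finitely generated and finitely presented graded $R[t]$-modules agree.

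Combining these two identifications, Theorem~\ref{thm:representation_theorem_naturals} yields the asserted isomorphism verbatim. I do not expect any genuine obstacle in this argument; the only non-trivial ingredient beyond Theorem~\ref{thm:representation_theorem_naturals} is Hilbert's Basis Theorem, which is already packaged in Lemma~\ref{lem:noethersch}, and this is precisely the step where the commutativity assumption on $R$ is used.
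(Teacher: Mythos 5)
Your proposal is correct and follows exactly the paper's route: the paper's own proof is the one-line observation that, by Lemma~\ref{lem:noethersch}, the corollary is just Theorem~\ref{thm:representation_theorem_naturals} restated, which is precisely the identification of the finitely generated and finitely presented subcategories (on both sides, via Noetherianity of $R$ and of $R[t]$) that you spell out. No gap; you have simply made the implicit details explicit.
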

\begin{proof}
By Lemma \ref{lem:noethersch}, the Corollary is just Theorem \ref{thm:representation_theorem_naturals} restated. 
\end{proof}

In Appendix~\ref{artin-rees-appendix}, we sketch an alternative proof of
this statement using Artin-Rees theory.

\section{Preliminaries on monoid structures}

\label{sec:prelim_monoid}

A \emph{monoid} is a set $G$ with a binary associative operation
$\star$ and a neutral element $e$, i.e., for all $g_{1},g_{2},g_{3}\in G$
we have $g_{1}\star g_{2}\in G$, $\left(g_{1}\star g_{2}\right)\star g_{3}=g_{1}\star\left(g_{2}\star g_{3}\right)$
and $e\star g_{1}=g_{1}\star e=g_{1}$.  A monoid is called \emph{commutative}
if $g_{1}\star g_{2}=g_{2}\star g_{1}$ for all $g_{1},g_{2}\in G$. We usually denote a monoid
by $G$ or $\left(G,\star\right)$. We sometimes omit denoting $\star$ if it clarifies the expression and no confusion is possible.

\paragraph{Monoid rings and gradings. }

Let $\left(G,\star\right)$ be a monoid and $R$ be a ring with unity. The 
\emph{monoid ring $R\left[G\right]$ }is defined as free $R$-module with 
basis $\{X^h\}_{h \in H}$ where ring multiplication of $R\left[G\right]$ is induced by 
its scalar multiplication as $R$-module and $X^{h_1} X^{h_2} := X^{h_1\star h_2}$. 
It can be easily verified that with these operations $R[G]$ indeed
becomes a ring with unity $X^e$.  
Note that both associativity of $\star$  and the existence of a neutral
element in $G$ are required to guarantee a ring structure with unity
on $R[G]$. Moreover, $R[G]$ is commutative if and only if both $R$
and $G$ are commutative. Furthermore, one can easily verify that $aX^g=X^ga$
in $R\left[G\right]$ whenever $a\in R$ and $g\in G$. 

Monoid rings are a generalization of polynomial rings. For instance,
with $R$ a ring, every $a\in R[t]$ has the form $\sum_{n\in\N}a_{n}t^n$
where almost all $a_{n}=0$. By canonical identification of $t^{n}$
with $X^{n}\in R\left[\mathbb{N}\right]$,
the two notions are isomorphic. Completely analogously,
one can obtain $R\left[\mathbb{N}^{n}\right]\cong R\left[t_{1},...,t_{n}\right]$ for all $0\neq n\in\mathbb{N}$. 
In general, we will often write elements of $R[G]$ in the form $\sum_{g\in G}a_{g}g$
with $a_{g}\in R$ and almost all $a_{g}=0$.

The concept of gradings extends from natural numbers to arbitrary
monoids without problems: a ring $S$ is called \emph{$G$-graded
ring} if $S=\oplus_{g\in G}S_{g}$ as abelian groups and $S_{g_{1}}S_{g_{2}}\subseteq S_{g_{1}\star g_{2}}$
for all $g_{1},g_{2}\in G$. Monoid rings $R[G]$ are typical examples for $G$-graded rings. For a $G$-graded ring $S$, an $S$-module
$M$ is called \emph{$G$-graded module} if $M=\oplus_{g\in G}M_{g}$
as abelian groups and $S_{g_{1}}M_{g_{2}}\subseteq M_{g_{1}\star g_{2}}$
for all $g_{1},g_{2}\in G$. 
A \emph{$G$-graded module morphism} $f:M\rightarrow N$ between $G$-graded $S$-modules is a module morphism  with $f\left(M_{g}\right)\subset N_{g}$
for all  $g\in G$. Equivalently, one can think of $f$
as a family $\left(f_{g}\right)_{g\in G}$ of morphisms on the components
with $f_{g}:M_{g}\rightarrow N_{g}$ for all $g\in G$. A $G$-graded
module morphism is an isomorphism if each $f_{g}$ is an isomorphism.
Given a ring $R$ and a monoid $G$, we denote the category of all $G$-graded modules over $R[G]$
and $G$-graded morphisms between $G$-graded $R[G]$-modules by $\rgmodcat$.

\paragraph{Good monoids.}

The categories $\rgmodcat$ exist without further assumptions on mo\-noids. However, in order to
generalize the Representation Theorem to monoids, we will require a few additional properties on monoids.

A monoid is called \emph{right-cancellative} if $g_{1}\star g_{3}=g_{2}\star g_{3}$
implies $g_{1}=g_{2}$. Similarly, it is called \emph{left-cancellative}
if $g_{1}\star g_{2}=g_{1}\star g_{3}$ implies $g_{2}=g_{3}$. It
is called \emph{cancellative} if it is right-cancellative and left-cancellative.
For commutative monoids these three properties are equivalent and in this case
we simply call such a monoid cancellative. A (left-/right-)cancellative 
monoid $G$ admits (left-/right-)cancellativity on a monoid ring $R[G]$ with respect to multiplication by
a monoid element (from the left/right). 

Examples of non-cancellative monoids are $\left[0,1\right]$ with
multiplication, matrices with multiplication and $\left(\mathbb{Z}_{2},\mathbb{N}\cup\left\{ \infty\right\} \right)$
where $\left(x,n\right)\star\left(y,m\right):=\left(xy,\text{min}\left(n,m\right)\right)$
and $e=\left(0,\infty\right)$. An example for a right-cancellative
monoid that is not left-cancellative is the monoid $\left\{ e,g_{1},g_{2},g_{3}\right\} $
where $e$ is the neutral element and $g_{i}\star g_{j}:=g_{i}$ for
all $i\in\left\{ 1,2,3\right\} $.

We call $g_{2}$ a \emph{(left-)multiple} of $g_{1}$ and write $g_{1}\preceq g_{2}$
if there exists an $h\in G$ such that $h\star g_{1}=g_{2}$. A \emph{proper
multiple} of $g_{1}$ is a multiple $g_{2}$ with $g_{1}\neq g_{2}$,
written as $g_{1}\prec g_{2}$. For a subset $\tilde{G}\subseteq G$
an element $h$ is called \emph{common multiple} of $\tilde{G}$ if $g\preceq h$
for all $g\in\tilde{G}$. We call a common multiple $h$ of $\tilde{G}$  \emph{partially least}, if there is no multiple $h'$ of $\tilde{G}$
such that $h'\prec h$. We write \emph{plcm} for partially least common multiples.
We say that the monoid $G$ is \emph{weak plcm} if for any finite subset
$H\subseteq G$ there are at most finitely many distinct partially least common
multiples of $H$.\footnote{It is called ``weak'' plcm to point out that the plcm does not have
to exist or to be unique}

\tikzstyle{vertex}=[circle, white, font=\sffamily\bfseries,   draw=black,fill=blue, minimum width=3.15em, minimum height=3.15em]
\tikzstyle{empty vertex} = [vertex, black, draw=white, fill=white]
\tikzstyle{edge} = [draw,.,line width=.5pt]
\tikzstyle{pointed edge} = [draw,dotted,.,line width=.5pt]
\begin{figure}
\centering
\begin{tikzpicture}[->]
    %vertices
  \foreach \pos/\name in {{(0,0)/e}, {(2,2.5)/a}, {(2,-2.5)/b}, {(3,0)/acb}, {(4.75,0)/ac^2b}, {(6.5,0)/ac^3b}, {(8.25,0)/ac^4b}}                           
        \node[vertex] (\name) at \pos {$\name$};
   % empty vertices
    \foreach \pos/\name in {{(10.5,0)/dots}}                           
        \node[empty vertex] (\name) at \pos {$\cdots$};
    % Connect vertices with edges 
    \foreach \source/ \dest  in {e/a,e/b,a/acb,b/acb,a/ac^2b,a/ac^3b,b/ac^2b,b/ac^3b,b/ac^4b,a/ac^4b}
        \path[edge] (\source) -- (\dest);
    % Pointed edge
        \foreach \source/ \dest  in {a/dots,b/dots}
        \path[pointed edge] (\source) -- (\dest);
\end{tikzpicture}
	  \caption{Graphical illustration of a monoid that is not good. The monoid is the non-commutative monoid generated by three elements $a,b,c$ modulo the congruence generated by $ac^{n}b\approx bc^{n}a$ for all $n\in\N_{>0}$. The plcm of $\{a,b\}$ are the elements of the countable set $\{ac^{n}b\}_{n\in\N_{>0}}$.}
	\label{fig:counterexample} 
\end{figure}
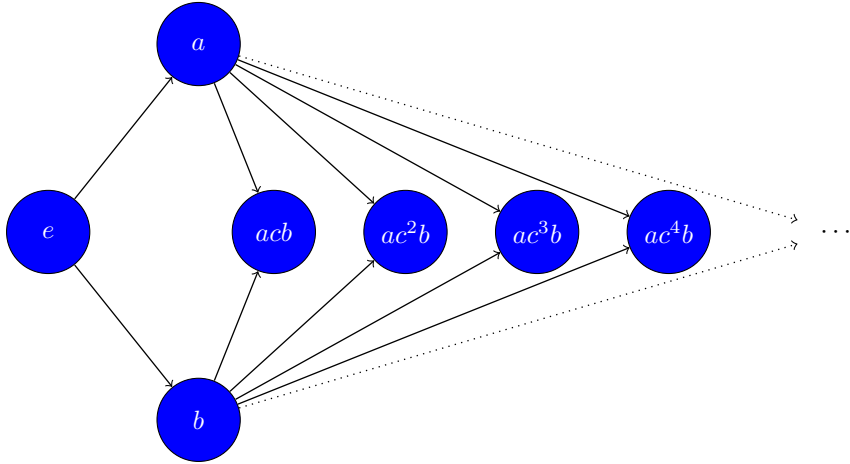

A monoid is \emph{anti-symmetric} if $g_{1}\preceq g_{2}$ and $g_{2}\preceq g_{1}$
imply that $g_{1}=g_{2}$. This is equivalent to the condition that
$\preceq$ turns $G$ into a poset. In an anti-symmetric monoid, no
element (except $e$) can have an inverse element.
\begin{definition}
We call a monoid \emph{good} if it is cancellative, anti-symmetric
and weak plcm. 
\end{definition}
Some easy commutative examples for good monoids with uniquely existing
plcms are $\left(\mathbb{N}^{k},+\right)$, $\left(\mathbb{Q}_{\geq0}^{k},+\right)$,
$\left(\mathbb{R}_{\geq 0}^{k},+\right)$, $\left(\left(0,1\right],\cdot\right)$,
$\left(\mathbb{Q}\cap\left(0,1\right],\cdot\right)$ and $\left(\left[1,\infty\right),\cdot\right)$. 
A fundamental class of good monoids are free monoids, which can be expressed as finite sequences
of elements of a set. In non-commutative free monoids, a subset admits 
common left multiples if and only if it contains a maximal element.
If this exists, it is also the unique plcm. 
Constructing cancellative anti-symmetric monoids that are not weak plcm does not come naturally.
Consider the non-commutative free monoid ge\-ne\-rated by $a,b,c$. Introduce
the congruence generated by $ac^{n}b\approx bc^{n}a$ for all $n\in\mathbb{N}_{>0}$ (see Figure~\ref{fig:counterexample}).
Then there are infinitely many plcm for $\left\{ a,b\right\} $ in
the quotient monoid, since every equivalence class with representative $ac^{n}b$ is partially least for $\left\{ a,b\right\} $.
In particular, good monoids are not closed under homomorphisms.

\tikzstyle{underlying edge} = [draw,line width=.5pt,.,black]
\tikzstyle{a-edge} = [draw,-,ultra thick,line width=4pt,red!90]
\tikzstyle{b-edge} = [draw,line width=4pt,-,yellow!70]
\tikzstyle{c-edge} = [draw,line width=4pt,-,black!15!green]
\tikzstyle{vertex}=[circle, white, font=\sffamily\bfseries,   draw=black,fill=blue, minimum width=1.5em, minimum height=1.5em]
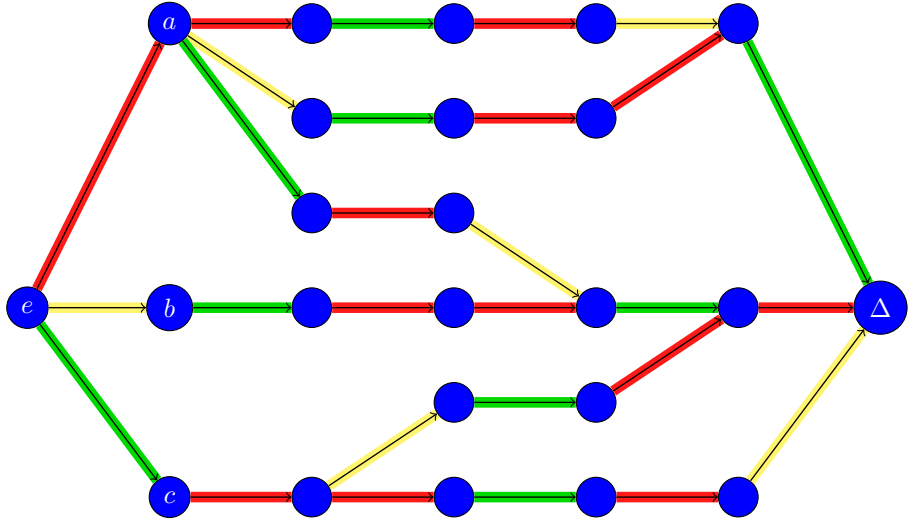
\begin{figure}
\centering
\begin{tikzpicture}[->]
    %named vertices
  \foreach \pos/\name in {{(0,0)/e},{(1.89,3.78)/a},{(1.89,0)/b},{(1.89,-2.52)/c}} 
        \node[vertex] (\name) at \pos {$\name$};
   \foreach \pos/\name in {{(11.34,0)/Delta}} 
        \node[vertex] (\name) at \pos {$\Delta$};
   % empty vertices
    \foreach \pos/\name in {
	{(3.78,3.78)/aa},{(5.67,3.78)/caa},{(7.56,3.78)/acaa},{(9.45,3.78)/bacaa},{(3.78,2.52)/ba},{(5.67,2.52)/cba},{(7.56,2.52)/acba},{(3.78,1.26)/ca},{(5.67,1.26)/aca},{(3.78,0)/cb},{(5.67,0)/acb},{(7.56,0)/aacb},{(9.45,0)/caacb},{(5.67,-1.26)/bac},{(7.56,-1.26)/cbac},{(3.78,-2.52)/ac},{(5.67,-2.52)/aac},{(7.56,-2.52)/caac},{(9.45,-2.52)/acaac}}                           
        \node[vertex] (\name) at \pos {};
    % Red edges 
    \foreach \source/ \dest  in {e/a,a/aa,caa/acaa,cba/acba,acba/bacaa,ca/aca,cb/acb,acb/aacb,caacb/Delta,cbac/caacb,c/ac,ac/aac,caac/acaac}
        \path[a-edge] (\source) -- (\dest);
    % Yellow edges 
    \foreach \source/ \dest  in {e/b,a/ba,acaa/bacaa,aca/aacb,ac/bac,acaac/Delta}
        \path[b-edge] (\source) -- (\dest);
    % Green edges 
    \foreach \source/ \dest  in {e/c,a/ca,aa/caa,bacaa/Delta,ba/cba,b/cb,aacb/caacb,bac/cbac,aac/caac}
        \path[c-edge] (\source) -- (\dest);
    % all edges
    \foreach \source/ \dest  in {e/a,a/aa,caa/acaa,cba/acba,acba/bacaa,ca/aca,cb/acb,acb/aacb,caacb/Delta,cbac/caacb,c/ac,ac/aac,caac/acaac,e/b,a/ba,acaa/bacaa,aca/aacb,ac/bac,acaac/Delta,e/c,a/ca,aa/caa,bacaa/Delta,ba/cba,b/cb,aacb/caacb,bac/cbac,aac/caac}
        \path[underlying edge] (\source) -- (\dest);
\end{tikzpicture}
	  \caption{Hasse diagram illustrating a finite part of a good monoid. It is the Garside monoid obtained by the free non-commutative monoid  $<a,b,c>$ modulo the congruence generated by $baca\approx a^2cb$, $ca^2cb \approx acbac$ and $acbac\approx cbaca$. The redly, yellowly and greenly highlighted arrows represent left-multiplication with the elements $a$, $b$ and $c$, respectively. A Garside element of a monoid is an element whose left and right divisors coincide, are finite and generate the monoid. ${\Delta}$ is the minimal Garside element. For more details on this example and how such examples can be considered as Garside monoids or divisibility monoids we refer to~\cite{picantin2005}.}
	\label{fig:garsideexample} 
\end{figure}

A convenient way to visualize a monoid is a directed multi-graph where each
vertex corresponds to an element of $G$. For a vertex corresponding to
$g\in G$, there is an outgoing edge for each $h\in G$ going to the
vertex $h \star g$. 
We ignore the self loops induced by the neutral element of $G$ on each vertex.
In this interpretation, $g_{1}\preceq g_{2}$ if
and only if there is an edge from the vertex labeled $g_{1}$ to the
vertex labeled $g_{2}$ in the graph. Right-cancellativity means that
there is at most one edge be\-tween any pair of vertices, that is,
the multi-graph is a graph.
Left-cancellativity means that edges obtained by multiplication by
$h$ from pairwise distinct vertices $g_{1},...,g_{n}$ can not lead
to the same vertex $g$. Anti-symmetry simply implies that the graph
is acyclic (modulo self-loops).
Weak plcm means that each finite subset of vertices has
a finite set of minimal common successors. As a consequence, a good
monoid is either trivial or infinite, because if $G$ has at least
two elements, there are at least two outgoing edges per vertex, and
one of them cannot be a self-loop because of right-cancellativity.
Because of acyclicity, we can thus form an infinite sequence of elements. 
For concrete pictures, it is usually convenient to draw only a subset of vertices and edges. 

\paragraph{Related concepts.}

The cancellation property is a classical assumption~\cite{ClPr61} and is sometimes even
part of the definition of a monoid~\cite{GerHK06}. The property of a monoid being 
weak plcm is less standard. It gives rise to a connection to the vivid branch of factorization 
theory~\cite{BaeSme15,GerHK06}. Least common multiples are defined in a related way, 
namely that a (left-~)lcm of a set of elements of a monoid is a right-divisor of all other common multiples. 
If a monoid is cancellative and anti-symmetric, then any set of lcms consists of at most one element. 
Therefore, monoids with these assumptions or with uniquely existing lcms are sometimes called 
lcm monoids~\cite{dehornoy2000braids}. Note that the existence of a plcm does not imply the existence of an lcm.
Conversely, if an lcm exists and is unique, it is also the unique plcm.
Therefore, divisibility monoids~\cite{Ku01} Garside monoids~\cite{dehornoy2015foundations} and Gaussian
monoids~\cite{DehPar99} are examples for subclasses of good monoids. These subclasses are involved in trace 
theory~\cite{DK99} and braid theory \cite{dehornoy2000braids,DehPar99,dehornoy2015foundations}, respectively. 
See Figure~\ref {fig:garsideexample} for a concrete example of a Garside monoid. 
Anti-symmetric monoids are sometimes also called centerless, conical, positive, 
zerosumfree~\cite{wehrung1996}, reduced~\cite{GerHK06} or monoids with trivial unit group~\cite{ClPr61}. 
The preorder induced by left-factorization is related to one of Green's preorders~\cite{grillet2017}. 
Given an anti-symmetric monoid $G$, the aforementioned preorder is a partial order and gives rise to a 
right-partially ordered monoid $(G,\star,\preceq)$, since clearly $g_1\preceq g_2$ implies $h\star g_1\preceq h\star g_2$
for all $h\in G$. If $G$ is commutative, then $(G,\star,\preceq)$ is a partially ordered monoid. A 
formulation of the interleaved equivalence of persistence modules over the totally ordered monoids $\N$ and $\R$  can be found 
in~\cite{VejdJoh12}. Interleaving metrics can even be defined for functors from a fixed preordered set to a fixed arbitrary category~\cite{BDSS13}.

\section{The Representation Theorem over monoids}

\label{sec:representation_over_G} 

\paragraph{Generalized persistence modules and $R\left[G\right]$-modules.}

We will now extend the Representation Theorem from linear sequences
of the form

\[
\begin{xy}\xymatrix{M_{0}\ar[r]^{\varphi_{0}} & M_{1}\ar[r]^{\varphi_{1}} & M_{2}\ar[r]^{\varphi_{2}} & \dots}
\end{xy}
\]

to representations of a monoid $(G,\star,\preceq)$, right-partially ordered by left factorization. For simplicity, we will
assume throughout this section that $(G,\star)$ is a good monoid~\textendash{}
see the conclusion for a discussion of how the conditions of $G$
could be further relaxed. We define persistence modules over $G$: 
\begin{definition}
Let $R$ be a ring with unity. A  \emph{generalized algebraic persistence module} \emph{($\gapm$)} is a tuple $\gapmname=\left(\left(M_{g}\right)_{g\in G},\left(\varphi_{g_{1},g_{2}}\right)_{g_{1}\preceq g_{2}\in G}\right)$ such that $M_{g}$ is an $R$-module, $\varphi_{g_{1,}g_{2}}:M_{g_{1}}\rightarrow M_{g_{2}}$
is a module morphism, $\varphi_{g,g}=1_{M_{g}}$
and $\varphi_{g_{1,}g_{3}}=\varphi_{g_{2,}g_{3}}\circ\varphi_{g_{1,}g_{2}}$
for all $g\in G$, $g_{1}\preceq g_{2}\preceq g_{3}\in G$. 
\end{definition}
Much more succinctly, we could equivalently define a $\gapm$ as a
functor from the poset category $G$ to $R\mbox{-}{\bf Mod}$. It
is clear that a $\gapm$ over the monoid $\mathbb{N}$ is just a $\dapm$.
Of interest is also the case $G=\left(\mathbb{N}^{k},+\right)$, which
has been investigated for example in \cite{ZC09}.

As in the case of $G=\mathbb{N}$, arbitrary $\gapm$ are closely
related to $R\left[G\right]$-modules. Given a $\gapm$ $\gapmname$,
we can assign a graded $R\left[G\right]$-module to it by setting
\begin{eqnarray}
\alpha\left(\gapmname \right):=\bigoplus_{g\in G}M_{g}\label{eqn:alpha_objects-1}
\end{eqnarray}
where multiplication by an element $h\in G$ is defined by $h\cdot m_{g}:=\varphi_{g,h\star g}(m_{g})\in M_{h\star g}$
for all $g\in G$ and all $m_{g}\in M_{g}$. \\ 
Vice versa, an $R[G]$-module $\oplus_{g\in G}M_{g}$ defines a $\gapm$ by 
\begin{eqnarray}
\beta \left( \bigoplus_{g\in G}M_{g} \right)  :=\left(\left(M_{g}\right)_{g\in G},\left(\varphi_{g_{1},g_{2}}\right)_{g_{1}\preceq g_{2}\in G}\right) \label{eqn:beta_objects-1}
\end{eqnarray}
where the morphisms are again defined conversely. More precisely, for all $g_{1}\preceq g_{2}\in G$ and all $m_{g_1}\in M_{g_1}$, we define $\varphi_{g_{1},g_{2}}(m_{g_1}):=h\cdot m_{g_1}$ with
$h\star g_{1}=g_{2}$. Note that $h$ is uniquely defined because
$G$ is assumed to be right-cancellative. 
\begin{definition}
A family of module morphisms $\xi_{G}=\left(\xi_{g}:M_{g}\rightarrow N_{g}\right)_{g\in G}$
be\-tween two $\gapm$ $\left(\left(M_{g}\right)_{g\in G},\left(\varphi_{g_{1},g_{2}}\right)_{g_{1}\preceq g_{2}\in G}\right)$,
$\left(\left(N_{g}\right)_{g\in G},\left(\psi_{g_{1},g_{2}}\right)_{g_{1}\preceq g_{2}\in G}\right)$
over the same ring $R$ with unity is called \emph{generalized algebraic persistence module morphism}
if 
\[
\psi_{g_{1},g_{2}}\circ\xi_{g_{1}}=\xi_{g_{2}}\circ\varphi_{g_{1},g_{2}}
\]
for all $g_{1}\preceq g_{2}\in G$. Equivalently, all diagrams of
the following form commute: 
\[
\begin{xy}\xymatrix{M_{g_{1}}\ar[r]^{\varphi_{g_{1},g_{2}}}\ar[d]^{\xi_{g_{1}}} & M_{g_{2}}\ar[d]^{\xi_{g_{2}}}\\
N_{g_{1}}\ar[r]^{\psi_{g_{1},g_{2}}} & N_{g_{2}}
}
\end{xy}
\]

With such morphisms, the class of all $\gapms$ over $R$ becomes a category,
which we call $\gampcat$. 
\end{definition}
\begin{lemma}
\label{lem:cat_equiv_general-G} The maps $\alpha$ and $\beta$ from
$(\ref{eqn:alpha_objects-1})$ and $(\ref{eqn:beta_objects-1})$ extend
to functors be\-tween $\gampcat$ and $\rgmodcat$ which form an
isomorphic pair of functors. In particular, the two categories are
isomorphic. 
\end{lemma}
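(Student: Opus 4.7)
The proof will mirror the structure of Lemma~\ref{lem:cat_equiv_general}, with the monoid elements playing the role that powers of $t$ played in the $\N$-indexed case. The plan is to define $\alpha$ and $\beta$ on morphisms, verify functoriality in each direction, and then observe that the compositions give the identity functors.

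\textbf{Extension of $\alpha$ to morphisms.} Given a $\gapm$ morphism $\xi_G = (\xi_g)_{g \in G}$ between $\gapmname$ and $\othergapmname$, I would set
\[
\alpha(\xi_G) : \bigoplus_{g \in G} M_g \rightarrow \bigoplus_{g \in G} N_g, \quad (m_g)_{g \in G} \mapsto (\xi_g(m_g))_{g \in G}.
\]
This is additive and $G$-graded by construction, so the only thing to check is compatibility with $R[G]$-multiplication. Since every element of $R[G]$ is a finite $R$-linear combination of basis elements $X^h$, and since the scalar action of $R$ is respected componentwise, it suffices to verify compatibility with multiplication by $X^h$ on a homogeneous element $m_{g_1} \in M_{g_1}$. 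By the definitions (\ref{eqn:alpha_objects-1}) and the commuting diagrams of a $\gapm$ morphism, both $\alpha(\xi_G)(X^h \cdot m_{g_1}) = \xi_{h\star g_1}(\varphi_{g_1, h\star g_1}(m_{g_1}))$ and $X^h \cdot \alpha(\xi_G)(m_{g_1}) = \psi_{g_1, h\star g_1}(\xi_{g_1}(m_{g_1}))$ coincide. Functoriality (preservation of identities and compositions) reduces to the componentwise statements $\alpha(1_{\gapmname}) = 1_{\alpha(\gapmname)}$ and $\alpha(\zeta_G \circ \xi_G) = \alpha(\zeta_G) \circ \alpha(\xi_G)$, which follow immediately from the componentwise definition.

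\textbf{Extension of $\beta$ to morphisms.} Given a graded $R[G]$-module morphism $\eta: \oplus_{g \in G} M_g \rightarrow \oplus_{g \in G} N_g$, the grading forces $\eta$ to restrict to $R$-module morphisms $\eta_g : M_g \rightarrow N_g$. For any $g_1 \preceq g_2$, right-cancellativity of the good monoid $G$ guarantees a unique $h \in G$ with $h \star g_1 = g_2$, and compatibility of $\eta$ with multiplication by $X^h$ yields the square
\[
\begin{xy}\xymatrix{M_{g_1} \ar[r]^{\varphi_{g_1,g_2}} \ar[d]^{\eta_{g_1}} & M_{g_2} \ar[d]^{\eta_{g_2}} \\ N_{g_1} \ar[r]^{\psi_{g_1,g_2}} & N_{g_2} }\end{xy}
\]
since both $\varphi_{g_1,g_2}$ and $\psi_{g_1,g_2}$ are by definition (\ref{eqn:beta_objects-1}) multiplication by $X^h$. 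Thus $\beta(\eta) := (\eta_g)_{g \in G}$ is a $\gapm$ morphism, and functoriality is again immediate from the componentwise definition.

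\textbf{Isomorphic pair.} Finally, $\alpha \circ \beta$ is the identity on $\rgmodcat$ on objects because the graded decomposition $\oplus_{g \in G} M_g$ is recovered exactly, and the $R[G]$-action after passing through $\beta$ and back through $\alpha$ reconstructs $X^h \cdot m_{g} = \varphi_{g, h\star g}(m_g)$, which was multiplication by $X^h$ to begin with. On morphisms the composition sends $\eta$ to the map acting as $\eta_g$ on the $g$-th component, which is $\eta$ itself. Symmetrically, $\beta \circ \alpha$ returns $\gapmname$ on objects, using right-cancellativity to recover each $\varphi_{g_1,g_2}$ from the $R[G]$-action via the unique $h$ with $h \star g_1 = g_2$, and is the identity on morphisms by the same componentwise reasoning. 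The main point of care throughout is precisely this appeal to right-cancellativity, which is the only property of a good monoid genuinely needed here; the other axioms will be used in the subsequent lemmas that transfer finiteness conditions across the equivalence.
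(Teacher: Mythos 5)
Your proposal is correct and follows essentially the same route as the paper's proof (given in its Appendix B): define $\alpha$ and $\beta$ componentwise on morphisms, verify compatibility with the $R[G]$-action by reducing to multiplication by a single monoid element $h$ (using the $\gapm$ morphism squares in one direction and right-cancellativity to recover the connecting maps in the other), and observe that the compositions are the identity functors. No gaps.
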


The proof is similar to the proof of Lemma \ref{lem:cat_equiv_general}~-- 
see Appendix~\ref{appendix_b} for details.

\paragraph*{Finiteness conditions.}
As in the case of linear sequences, we are interested in the subcategory of $\gampcat$ 
that is isomorphic to the category of finitely presented $R[G]$-modules. In order to describe 
the desired subcategory constructively, we introduce the notions of frames:

\begin{definition}
Let $\gapmname=\left(\left(M_{g}\right)_{g\in G},\left(\varphi_{g_{1},g_{2}}\right)_{g_{1}\preceq g_{2}\in G}\right)$ be a $\gapm$.
For $g\in G$, $h\in G$ is a \emph{frame} of $g$ (wrt.~$\gapmname$)
if $h\preceq g$ and for all $h\preceq \tilde{g} \preceq g$,
the map $\varphi_{h,\tilde{g}}$ is an isomorphism.
We say that $H\subseteq G$ is a \emph{framing set} of $\gapmname$
if every $g\in G$ has a frame in $H$.
$\gapmname$ is of \emph{finite type} if there exists a finite
framing set for $G$. It is called of \emph{finitely presented
(generated) type} if it is of finite type and each $M_{g}$ is finitely
presented (generated) as an $R$-module. 
\end{definition}
From this definition, it follows that $e$ is an element in each framing
set because of anti-symmetry. Moreover, trivially, each $\gapmname$
has a framing set, namely $G$ itself. Another useful property is
that if $g\in G$ and $h$ is a frame of $g$,
then for every $\tilde{g}$ with $h\preceq \tilde{g}\preceq g$,
$\varphi_{\tilde{g},g}$ is an isomorphism as well, just because 
$\varphi_{\tilde{g},g}\circ\varphi_{h,\tilde{g}}=\varphi_{h,g}$.  

We discuss a related concept that is equivalent in some cases. Let
 $\gapmname=\left(\left(M_{g}\right)_{g\in G},\left(\varphi_{g_{1},g_{2}}\right)_{g_{1}\preceq g_{2}\in G}\right)$ 
be a $\gapm$. We define $(g_{i})_{i\in\mathbb{N}}$ to be a sequence in $G$ if $g_i\in G$
and $g_{i}\preceq g_{i+1}$ for all $i\in\N$. A sequence $(g_{i})_{i\in\mathbb{N}}$ in $G$ induces a sequence
$(M_{g_{i}})_{i\in\mathbb{N}}$ in $\gapmname$ with connecting morphisms
$\varphi_{g_{i},g_{i+1}}:M_{g_{i}}\rightarrow M_{g_{i+1}}$. We say that the
latter sequence becomes \emph{stationary} if there exists a $D\in\N$
such that $\varphi_{g_{i},g_{i+1}}$ is an isomorphism for
all $i\geq D$. Note that (stationary) sequences in a \gapm are (finite type) \dapms.
\begin{lemma}
For a $\gapm$ $\gapmname=\left(\left(M_{g}\right)_{g\in G},\left(\varphi_{g_{1},g_{2}}\right)_{g_{1}\preceq g_{2}\in G}\right)$ of finite type, every sequence in $\gapmname$ becomes
statio\-nary. 
\end{lemma}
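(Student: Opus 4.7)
The plan is to combine the finiteness of a framing set with a pigeonhole argument. Fix a finite framing set $H$ for $\gapmname$ and a sequence $(g_i)_{i\in\N}$ in $G$. By definition, for each $i$ we may choose some $h_i\in H$ that is a frame of $g_i$. Because $H$ is finite but there are infinitely many indices, some particular element $h^{\ast}\in H$ must serve as a frame for $g_{i_k}$ for an infinite subsequence of indices $i_1<i_2<i_3<\cdots$. I will take $D:=i_1$ as my stationarity threshold and show that $\varphi_{g_i,g_{i+1}}$ is an isomorphism for every $i\geq D$.

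Fix such an $i\geq D$ and choose $k$ with $i_k\leq i<i_{k+1}$; such a $k$ exists since the $i_\ell$ are infinitely many. Then chaining $\preceq$ yields
\[
h^{\ast}\preceq g_{i_k}\preceq g_i\preceq g_{i+1}\preceq g_{i_{k+1}}.
\]
Since $h^{\ast}$ is a frame of $g_{i_{k+1}}$, the definition of frame tells us that $\varphi_{h^{\ast},\tilde g}$ is an isomorphism for every $\tilde g$ sandwiched between $h^{\ast}$ and $g_{i_{k+1}}$. Applying this to $\tilde g = g_i$ and $\tilde g = g_{i+1}$ gives that both $\varphi_{h^{\ast},g_i}$ and $\varphi_{h^{\ast},g_{i+1}}$ are isomorphisms. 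From the cocycle relation $\varphi_{h^{\ast},g_{i+1}}=\varphi_{g_i,g_{i+1}}\circ\varphi_{h^{\ast},g_i}$, the map $\varphi_{g_i,g_{i+1}}$ is then forced to be an isomorphism (explicitly, it equals $\varphi_{h^{\ast},g_{i+1}}\circ\varphi_{h^{\ast},g_i}^{-1}$), which is exactly what stationarity demands.

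I do not expect genuine obstacles here; the only subtle point is that the choice of frame $h_i\in H$ for a given $g_i$ need not be canonical, so one could not, a priori, expect the sequence of chosen frames to stabilize in any strong sense. The pigeonhole step sidesteps this by only requiring that some single $h^{\ast}$ reappear as a frame infinitely often, which in turn suffices because the frame condition automatically propagates to all elements between $h^{\ast}$ and a later sandwiching $g_{i_{k+1}}$. Anti-symmetry and right-cancellativity of the good monoid $G$ are not needed explicitly in this argument, but they enter implicitly through the well-definedness of the framing notion used in the proof.
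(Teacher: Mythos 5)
Your proof is correct and follows essentially the same route as the paper: both arguments exploit the finiteness of the framing set $H$ (yours via pigeonhole on the chosen frames, the paper's via the stabilization of the increasing sets $H_i=\{h\in H\mid h\preceq g_i\}$) and then conclude with the same frame-propagation step $\varphi_{g_i,g_{i+1}}=\varphi_{h,g_{i+1}}\circ\varphi_{h,g_i}^{-1}$. The only cosmetic difference is that you sandwich $g_{i+1}$ below a later term $g_{i_{k+1}}$ framed by a fixed $h^{\ast}$, whereas the paper applies the frame property of $g_{i+1}$ itself; both are valid.
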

\begin{proof}
Consider a sequence $(g_{i})_{i\in\mathbb{N}}$ in $G$. Let $H=\{h_{1},\ldots,h_{k}\}$
be a finite set that frames $\gapmname$. Now, for every $g_{i}$,
if $h_{j}\preceq g_{i}$, also $h_{j}\preceq g_{\ell}$ for $i\leq\ell$.
Hence, setting $H_{i}:=\{h\in H\mid h\preceq g_{i}\}$, we get that
$H_{0}\subseteq H_{1}\subseteq\ldots$, and since $H$ is finite,
there is a $D\in\N$ such that $H_{D}=H_{D+1}=\ldots$.

Now, fix some $i\geq D$. Pick a frame $h\in H$ of $g_{i+1}$.
By construction, $h\in H_{D}$ and hence, $h\preceq g_{i}$ as well. 
Hence, $h\preceq g_i\preceq g_{i+1}$, which implies
that $\varphi_{g_{i},g_{i+1}}$ is an isomorphism.
\end{proof}
The converse of the statement is not true in general: Consider the
monoid $\left(\Q_{\geq0},+\right)$. The monoid is good, since the (unique)
plcm of a finite subset is the maximal element in the set. 
We construct a $\gapm$ $\gapmname$
by setting $M_{0}:=R$ and $M_{q}:=0$ for all $q>0$ and let all maps
be the zero maps. It is then obvious that every sequence
becomes stationary. However, no finite subset can frame $\gapmname$
because for every choice of finite subsets $\left\{ 0,q_{1},...,q_{D}\right\} $
with $q_{i}>0$ there are elements $q>0$ such that $q<q_{i}$ for
all $i\in\left\{ 1,...,D\right\} $.

In the previous example, we also observe that the corresponding $R[\Q_{\geq0}]$-module
is finitely generated. In some cases, however, both conditions
are indeed equivalent: 
\begin{lemma}
If $G=\N^{k}$, a $\gapm$ is of finite type if and only if every
sequence becomes stationary. 
\end{lemma}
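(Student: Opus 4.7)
The forward direction is immediate from the preceding lemma, so I would only need to establish the converse: assuming every sequence in $\gapmname$ becomes stationary, $\gapmname$ is of finite type. I would argue by contraposition, constructing a non-stationary sequence from the failure of finite type. The essential combinatorial input will be Dickson's lemma, i.e., the fact that $\N^k$ is a well-quasi-order with respect to $\preceq$.

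First, I would inductively produce an infinite sequence of pairwise distinct elements $h_1, h_2, \ldots \in G$ such that no finite prefix $\{h_1, \ldots, h_n\}$ frames $\gapmname$. Set $h_1 := e$; given $h_1, \ldots, h_n$, choose $h_{n+1}$ to be any element of $G$ not framed by $\{h_1, \ldots, h_n\}$. Such an element exists precisely because $\gapmname$ is not of finite type. Pairwise distinctness is automatic: if $h_j$ were equal to some earlier $h_i$, then $h_i$ would trivially frame $h_j$ via the identity, contradicting the choice of $h_j$.

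Next, I would invoke Dickson's lemma to extract from $(h_n)_{n\in\N}$ a non-decreasing subsequence $h_{n_1} \preceq h_{n_2} \preceq \ldots$. By distinctness and anti-symmetry of $\preceq$, this subsequence is in fact strictly increasing, $h_{n_1} \prec h_{n_2} \prec \ldots$. For each $j$, the element $h_{n_{j+1}}$ is not framed by $h_{n_j} \in \{h_1, \ldots, h_{n_{j+1}-1}\}$; since $h_{n_j} \preceq h_{n_{j+1}}$ is already in hand, the only way framing can fail is that there exists some $\tilde g_j$ with $h_{n_j} \preceq \tilde g_j \preceq h_{n_{j+1}}$ for which $\varphi_{h_{n_j}, \tilde g_j}$ is not an isomorphism. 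The possibility $\tilde g_j = h_{n_j}$ is automatically excluded, because $\varphi_{h_{n_j}, h_{n_j}}$ is the identity and hence an isomorphism.

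Finally, I would interleave these choices into the chain $h_{n_1} \preceq \tilde g_1 \preceq h_{n_2} \preceq \tilde g_2 \preceq \ldots$ in $G$; this is a valid sequence in the sense of the definition, and infinitely many of its connecting morphisms, namely the $\varphi_{h_{n_j}, \tilde g_j}$, fail to be isomorphisms. Thus the sequence does not become stationary, contradicting our hypothesis, and the proof is complete. I expect the main conceptual obstacle to be recognizing that Dickson's lemma is exactly what carries the argument through for $\N^k$; this is also the reason the equivalence fails for general good monoids such as $(\Q_{\geq 0}, +)$, as observed in the example preceding the statement.
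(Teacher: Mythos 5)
Your proof is correct, but it takes a genuinely different route from the paper's. The paper first shows that over $\N^k$ every \gapm admits a \emph{reduced} framing set (one in which no element frames another), using that $\prec$-decreasing chains in $\N^k$ terminate; if the module is not of finite type this reduced framing set is infinite, and the paper then extracts a non-stationary chain from it by repeatedly applying Dickson's lemma in the ``finitely many minimal elements'' form, at each stage keeping an infinite residual subset and inserting a witness to the failure of framing between consecutive chain elements. You bypass the reduced-framing-set machinery entirely: you greedily build an infinite sequence $h_1,h_2,\dots$ with no $h_j$ framed by any earlier $h_i$ (distinctness following since every element frames itself), invoke the well-quasi-order property of $(\N^k,\preceq)$ to extract an infinite $\prec$-increasing subsequence, and interleave witnesses $\tilde g_j$ of the failure of $h_{n_j}$ to frame $h_{n_{j+1}}$. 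Both arguments ultimately rest on Dickson's lemma, but in different formulations: you need the ``every infinite sequence has an infinite non-decreasing subsequence'' form, which follows from the ``finitely many minimal elements'' form the paper cites by the standard coordinate-by-coordinate extraction. Your version is shorter and avoids the (somewhat tersely justified) existence claim for reduced framing sets; the paper's version has the side benefit that reduced framing sets reappear in its subsequent discussion of monoids like $(\N^\N,+)$. One cosmetic point: in your interleaved chain it may happen that $\tilde g_j=h_{n_{j+1}}$, so consecutive entries can coincide; this is harmless since the definition of a sequence only requires $g_i\preceq g_{i+1}$, and the non-isomorphisms $\varphi_{h_{n_j},\tilde g_j}$ still occur at infinitely many consecutive positions.
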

\begin{proof}
Fix $\gapmnameNk=\left(\left(M_{g}\right)_{g\in \N^{k}},\left(\varphi_{g_{1},g_{2}}\right)_{g_{1}\preceq g_{2}\in \N^{k}}\right)$. Let $H\subset \N^{k}$
be a (not necessarily finite) set framing $\gapmnameNk$.
We call $H$ \emph{reduced} if there is no pair $h_1\neq h_2\in H$,
such that $h_1$ is a frame of $h_2$.
If $H$ is not reduced, and $h_1\preceq h_2$ is a pair as above,
it is not difficult to show that $H\setminus\{h_{2}\}$ is a framing
set for $\gapmnameNk$ as well.
Moreover, for $G=\N^k$, every decreasing sequence of elements
with respect to $\prec$ has a minimal element. 
That implies that there exists a reduced framing set for $\gapmnameNk$.

By the previous lemma, we know that if $\gapmnameNk$ is of finite type,
every sequence becomes stationary. For the converse, if $\gapmnameNk$
is not of finite type, we choose a reduced 
framing set $H\subseteq\N^{k}$, which is necessarily infinite.
We will now construct a non-stationary sequence iteratively,
adding two elements to the sequence in each step.
Set $g_0:=e\in\N^k$ and $H_1:=H\setminus\{g_0\}$;
during the construction, $H_i$ will always be an infinite subset of $H$.
For any such set $H_i$, by Dickson's Lemma (\cite{ros99}, Theorem 5.1), 
there exists a finite
subset $A_{i}\subset H_{i}$ such that for every $h\in H_{i}$, there
exists an $a\in A_{i}$ with $a\preceq h$. 
Define $M_i(a):=\{h\in H_i\mid a\prec h\}$
for $a\in A_{i}$. Since $H_{i}$ is infinite, there exists at least
one $g_{2i}\in A_{i}$ such that $M_i(g_{2i})$ is infinite. 
Since $g_{2i-2}$ is not a frame of $g_{2i}$ 
(because they are both in $H$), there exists
some $g_{2i-1}\in\N^k$ such that
$g_{2i-2}\prec g_{2i-1}\preceq g_{2i}$ and $\varphi_{g_{2i-2},g_{2i-1}}$ is not an
isomorphism. We set $H_{i+1}:=M_i(g_{2i})$ and proceed with the next iteration.
In this way, we obtain a sequence $(g_{i})_{i\in\N}$ 
with $g_{i}\preceq g_{i+1}$ that does not become stationary. 
\end{proof}

We remark that the equivalence is not true in general as the aforementioned
counterexample over $\left(\Q_{\geq0},+\right)$ shows. In this case, a reduced framing set
does not exist. Dually, the monoid $(\N^\N,+)$ of all sequences of natural numbers
yields an example for which the equivalence is not true either, but it is always possible
to obtain reduced framing sets for \gapms indexed over $(\N^\N,+)$.

Let us now see that $\gapms$ of finitely presented type over $R$ are isomorphic
to finitely presented graded $R\left[G\right]$-modules using the
 functors $\alpha$ and $\beta$. The next three lemmas
generalize the corresponding statements in Section~\ref{sec:representation_over_N},
with similar proof ideas. 
\begin{lemma}
\label{lem:alpha_lemma-G}If a $\gapm$ $\gapmname=\left(\left(M_{g}\right)_{g\in G},\left(\varphi_{g_{1},g_{2}}\right)_{g_{1}\preceq g_{2}\in G}\right)$
is of finitely presented type, $\alpha\left(\gapmname\right)$ is
finitely presented. 
\end{lemma}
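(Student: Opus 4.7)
The plan is to imitate the proof of Lemma~\ref{lem:alpha_lemma}, with the finite framing set $H$ of $\gapmname$ playing the role of the initial segment $\{0,\ldots,D\}$ and monoid ring elements $X^{\ell}\in R[G]$ replacing powers of $t$. Three ingredients will combine: the framing property (to generate the module), the $R$-presentations of the $M_h$ for $h\in H$ (``vertical'' syzygies at each framing element), and the weak plcm property (to harvest finitely many ``horizontal'' compatibility syzygies tying together generators living at different framing elements).

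\textbf{Generation.} Fix a finite framing set $H$ of $\gapmname$ and, for each $h\in H$, a finite $R$-generating set $\mathfrak{G}_h$ of $M_h$. I claim $\mathfrak{G}:=\bigcup_{h\in H}\mathfrak{G}_h$ generates $\alpha(\gapmname)$ as an $R[G]$-module. For a homogeneous $m_g\in M_g$, choose a frame $h\in H$ of $g$, pull $m_g$ back through the isomorphism $\varphi_{h,g}$ to $m_h\in M_h$, expand in $\mathfrak{G}_h$, and act by $X^{\ell}$ where $\ell\star h=g$ is uniquely determined by right-cancellativity. Denote by $\mu:R[G]^n\twoheadrightarrow\alpha(\gapmname)$ the associated surjection, whose free generators $\mathfrak{e}^{(i)}_h$ carry degree $h$.

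\textbf{Finitely many candidate syzygies.} Assemble $\mathfrak{Z}\subseteq\ker\mu$ from two families. First, for every $h\in H$, lift a finite generating set of the $R$-syzygies $\ker\mu_h$ (available since $M_h$ is finitely presented) to degree-$h$ elements of $\ker\mu$. Second, for every pair $h_1,h_2\in H$ and every plcm $p$ of $\{h_1,h_2\}$ (finitely many by weak plcm), fix a frame $h^{\star}\in H$ of $p$ and, for each generator $\mathfrak{g}^{(i)}_{h_j}$, record the compatibility syzygy $X^{\ell_j}\mathfrak{e}^{(i)}_{h_j}-X^{\ell^{\star}}\sum_v\lambda_v\mathfrak{e}^{(v)}_{h^{\star}}$, where $\ell_j\star h_j=\ell^{\star}\star h^{\star}=p$ and the scalars $\lambda_v\in R$ come from expressing $\varphi_{h^{\star},p}^{-1}(\varphi_{h_j,p}(\mathfrak{g}^{(i)}_{h_j}))$ in $\mathfrak{G}_{h^{\star}}$. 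The resulting $\mathfrak{Z}$ is finite.

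\textbf{Reduction of a general syzygy.} A homogeneous $x\in\ker\mu$ of degree $g$ decomposes uniquely, by right-cancellativity, as $x=\sum_{h,i}r_{h,i}X^{\ell_h}\mathfrak{e}^{(i)}_h$ over the finitely many $h\in H$ with $h\preceq g$. Fix a frame $h^{\star}\in H$ of $g$. For every other $h$ in the support, apply the plcm syzygy for the pair $\{h,h^{\star}\}$ at some plcm $p\preceq g$, multiplied by $X^{\ell''}$ with $\ell''\star p=g$, to rewrite the $\mathfrak{e}^{(i)}_h$-term as a combination of $\mathfrak{e}^{(v)}_{h^{\star}}$-terms modulo $\mathfrak{Z}$. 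After finitely many such steps, the modified element is a degree-$g$ syzygy supported only on $\mathfrak{e}^{(v)}_{h^{\star}}$; injectivity of $\varphi_{h^{\star},g}$ collapses it (up to the factor $X^{\ell^{\star}}$) to an element of $\ker\mu_{h^{\star}}$, which is covered by the vertical syzygies at $h^{\star}$.

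\textbf{Main obstacle.} The delicate point is guaranteeing, in the reduction step, that some plcm of $\{h,h^{\star}\}$ actually lies below $g$, so that the relevant plcm syzygy can be lifted to degree $g$. This is automatic for $G=\N^k$, where the componentwise maximum plays both roles, and for good monoids satisfying a descending chain condition on common multiples; for a general good monoid, however, the set of common multiples of a pair need not contain a plcm under a prescribed common multiple. Handling this gap cleanly---perhaps by closing $H$ under finitely many plcm-passes and verifying that the closure remains a finite framing set of $\gapmname$, or by a finer reduction that exploits the full weak plcm data---is where I expect the bulk of the technical work to lie.
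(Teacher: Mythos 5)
Your generation step and your ``vertical'' syzygies (finite presentations of the $M_h$ for $h$ in the framing set $H$) coincide with the paper's. But the reduction you propose diverges from the paper's, and the obstacle you flag at the end is a genuine gap, not a technicality: the weak plcm hypothesis only bounds the \emph{number} of partially least common multiples of $\{h,h^{\star}\}$; it gives no descending chain condition on common multiples, so a given common multiple $g$ of $\{h,h^{\star}\}$ need not dominate any plcm of $\{h,h^{\star}\}$ at all. Hence the element $p\preceq g$ your rewriting step requires may simply not exist, and your suggested repair (closing $H$ under plcm-passes) does not obviously terminate, nor is it clear the closure stays a framing set. There is also a secondary mismatch in your scheme: the compatibility syzygy for $\{h,h^{\star}\}$ at a plcm $p$ rewrites terms onto a frame of $p$, which need not be the frame $h^{\star}$ of $g$ you fixed, so the iteration is not visibly a strict reduction.

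The paper's proof of Lemma~\ref{lem:alpha_lemma-G} avoids plcms entirely in this direction (they are needed only for the converse, Lemma~\ref{lem:beta_lemma-G}). Its ``horizontal'' syzygies are indexed by \emph{comparable} pairs $h_i\preceq h_j$ of framing elements and live in degree $h_j$: they have the form $f_{i,j}\mathfrak{e}_i-\sum_v\lambda_v\mathfrak{e}_j^{(v)}$ with $f_{i,j}\star h_i=h_j$. A homogeneous syzygy $x$ of degree $g\notin H$ is then handled by choosing a frame $h_i\in H$ of $g$ and factoring the \emph{entire} element as $x=fx'$ with $f\star h_i=g$, so that $x'$ is a syzygy of degree $h_i\in H$ (since multiplication by $f$ is an isomorphism $M_{h_i}\to M_g$ and $R[G]^n$ is free); degrees lying in $H$ are then cleaned up using only the comparable-pair syzygies, exactly as in your final collapse step. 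So the missing idea is to pull the whole syzygy back along the frame isomorphism rather than to merge its support at common multiples; once you do that, no plcm below $g$ is ever needed. If you want to pursue your route instead, you would have to prove the existence of the required $p\preceq g$, which fails for general good monoids.
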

\begin{proof}
Fix $H=\{h_{1},\ldots,h_{D}\}$ as finite set framing $\gapmname$.
Write $\mathfrak{G}_{i}$ for a finite generating set of $M_{h_{i}}$
and $n_{i}:=|\mathfrak{G}_{i}|$. Let $g\in G$ and $h_{i}$
be a frame of $g$. In particular,
there exists some $f\in G$ such that $f\star h_{i}=g$, meaning that
in $\alpha\left(\gapmname\right)$, the map $f:M_{h_{i}}\rightarrow M_{g}$
is an isomorphism. Hence, the elements of $f\mathfrak{G}_{i}$ generate
$M_{g}$. This shows that $\bigcup_{i=1}^{D}\mathfrak{G}_{i}$
generates $\alpha\left(\gapmname\right)$.

For the second part, let $\fpmapping_{i}:R^{n_{i}}\rightarrow M_{h_{i}}$
be the generating surjective map that corresponds to $\mathfrak{G}_{i}$
where $i\in\left\{ 1,...,D\right\} $. Writing $n=\sum_{i=1}^{D}n_{i}$,
there is a map $\fpmapping:R[G]^{n}\rightarrow\alpha(\dapmname)$
that corresponds to the the generating set $\bigcup_{i=1}^{D}\mathfrak{G}_{i}$.
If $\mathfrak{g}_{i}$ is a generator, we will use the notation $\mathfrak{e}_{i}$
to denote the corresponding generator of $R[G]^{n}$.

We now define a finite set of elements of $\ker\,\fpmapping$. First
of all, let $\mathfrak{Z}_{i}$ be the generating set of $\ker\,\fpmapping_{i}$
for $0\leq i\leq D$. Clearly, all elements of $\mathfrak{Z}_{i}$
are also in $\ker\,\fpmapping$. Moreover, for $h_{i}\preceq h_{j}$
and any generator $\mathfrak{g}_{i}$ in $\mathfrak{G}_{i}$ with
$\varphi_{h_{i},h_{j}}(\mathfrak{g}_{i})\neq0$, we can write 
\[
\varphi_{h_{i},h_{j}}(\mathfrak{g}_{i})=\sum_{v=1}^{n_{j}}\lambda_{v}\mathfrak{g}_{j}^{(v)}
\]
where $\lambda_{v}\in R$ and $\mathfrak{G}_{j}=\{\mathfrak{g}_{j}^{(1)},\ldots,\mathfrak{g}_{j}^{(n_{j})}\}$.
In that case, the corresponding element 
\[
f_{i,j}\mathfrak{e}_{i}-\sum_{\nu=1}^{n_{j}}\lambda_{v}\mathfrak{e}_{j}^{(v)}
\]
is in $\ker\,\fpmapping$ where $f_{i,j}$ is the unique element
such that $f_{i,j}\star h_{i}=h_{j}$.

We let $\mathfrak{Z}_{i,j}$ denote the finite set obtained by picking
one element as above for each $\mathfrak{g}_{i}$ with $\varphi_{h_{i},h_{j}}(\mathfrak{g}_{i})\neq0$.
We claim that $\mathfrak{Z}:=\bigcup_{i=0}^{D}\mathfrak{Z}_{i}\cup\bigcup_{h_{i}\preceq h_{j}}\mathfrak{Z}_{i,j}$
generates $\ker\,\fpmapping$:

Fix an element in $x\in\ker\,\fpmapping$, which is of the form 
\[
x=\sum_{\ell}\lambda_{\ell}\mathfrak{e}_{\ell}
\]
with $\lambda_{\ell}\in R[G]$ and $\mathfrak{e}_{\ell}$ a generator
of $R[G]^{n}$. We can assume that $x$ is homogeneous of some degree
$g$. First we consider the case that $g=h_{j}\in H$ and all $\lambda_{\ell}$
are of degree $e$. Then, all $\mathfrak{e}_{\ell}$ that appear in
$x$ are of the same degree, and hence, their images under $\fpmapping$
are generators of $M_{h_{j}}$. It follows that $x$ is generated
by the set $\mathfrak{Z}_{j}$.

Next, we consider the case that $g=h_{j}\in H$ and some $\lambda_{\ell}$
is of non-trivial degree. Because $x$ is homogeneous, $\lambda_{\ell}$
is then of the form $r_{\ell}f_{\ell,j}$ for some $r_{\ell}\in R$
and non-trivial $f_{\ell,j}\in G$. Since the degree of $\mathfrak{e}_{\ell}$
is a $h_{\ell}$ with $f_{\ell,j}\star h_{\ell}=h_{j}$, there is
an element $\mathfrak{z}_{\ell}$ in $\mathfrak{Z}_{\ell,j}$ of the
form $\mathfrak{z}_{\ell}=f_{\ell,j}\mathfrak{e}_{\ell}-\sum_{v=1}^{n_{\ell}}\tilde{\lambda}_{v}\mathfrak{e}_{j}^{(v)}$
with all $\mathfrak{e}_{j}^{(v)}$ of degree $h_{j}$ and each $\tilde{\lambda}_{v}\in R$.
Then, by turning from $x$ to $x-r_{\ell}\mathfrak{z}_{\ell}$, we
turn the coefficient of $\mathfrak{e}_{\ell}$ into $0$ and we only
introduce summands with coefficients of degree $e$ in any non-trivial
$g\in G$. Iterating this construction for each summand with coefficient
of positive degree, we get an element $x'=x-\sum_{w}r_{w}\mathfrak{z}_{w}$
with $r_{w}\in R$ and $\mathfrak{z}_{w}$ elements of $\mathfrak{Z}$,
and $x'$ only having coefficients of degree $e$ in any non-trivial
$g\in G$. By the first part, $x'$ is generated by $\mathfrak{Z}$,
hence so is $x$.

Finally, let $g\notin H$. Then, there exists some $h_{i}\in H$ and
some $f\in G$ such that $f\star h_{i}=g$ and multiplication with
$f$ yields an isomorphism $M_{h_{i}}\rightarrow M_{g}$. Hence, we
have that $x=fx'$ with $x'$ homogeneous of degree $h_{i}$. Since
$0=\fpmapping(x)=f\fpmapping(x')$, it follows that $x'\in\ker\,\fpmapping$.
By the above cases, $x'$ is generated by $\mathfrak{Z}$, and hence,
so is $x$. 
\end{proof}
For the next two lemmas, we fix a finitely presented graded $R\left[G\right]$-module
$\rgmodname:=\oplus_{g\in G}M_{g}$ with a map 
\[
R\left[G\right]^{n}\overset{\fpmapping}{\rightarrow}\rgmodname
\]
such that $\ker\,\fpmapping$ is finitely generated. Moreover, we
let $\mathfrak{G}:=\{\mathfrak{g}^{(1)},\ldots,\mathfrak{g}^{(n)}\}$
denote generators of $\rtmodname$ and $\mathfrak{Z}:=\{\mathfrak{z}^{(1)},\ldots,\mathfrak{z}^{(m)}\}$
denote a generating set of $\ker\,\fpmapping$. We assume that each
$\mathfrak{g}^{(i)}$ and each $\mathfrak{z}^{(j)}$ is homogeneous
(with respect to the grading of the corresponding module), and we
let $\deg(\mathfrak{g}^{(i)})$, $\deg(\mathfrak{z}^{(j)})$ denote
the degrees. 
\begin{lemma}
\label{lem:comp_are_finitely_presented-G} Each $M_{g}$ is finitely
presented as an $R$-module. 
\end{lemma}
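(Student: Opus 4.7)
The plan is to mirror the proof of Lemma~\ref{lem:comp_are_finitely_presented}, with the role of the monomials $t^{i-d_j}$ and $t^{i-d'_k}$ taken over by unique monoid elements supplied by right-cancellativity.

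For every $g \in G$, set $J_g := \{j : \deg(\mathfrak{g}^{(j)}) \preceq g\}$ and $n_g := |J_g|$. By right-cancellativity, for every $j \in J_g$ there is a unique $f_j \in G$ with $f_j \star \deg(\mathfrak{g}^{(j)}) = g$. I would define $\fpmapping_g : R^{n_g} \rightarrow M_g$ by sending the $j$-th generator $\mathfrak{e}_g^{(j)}$ of $R^{n_g}$ to $f_j \mathfrak{g}^{(j)} \in M_g$. Surjectivity then follows from the observation that any $y \in M_g$ is a homogeneous element of degree $g$ in $\rgmodname$, so writing $y = \sum_j \lambda_j \mathfrak{g}^{(j)}$ with $\lambda_j \in R[G]$ and exploiting homogeneity together with right-cancellativity, every non-zero $\lambda_j$ must be of the form $r_j f_j$ for some $r_j \in R$ (which in particular forces $j \in J_g$).

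For finite presentation, I would then build, for each generator $\mathfrak{z}^{(k)}$ of $\ker\,\fpmapping$ with $\deg(\mathfrak{z}^{(k)}) \preceq g$, an analogue $\mathfrak{z}_g^{(k)} \in R^{n_g}$: let $f'_k$ be the unique element with $f'_k \star \deg(\mathfrak{z}^{(k)}) = g$, so that $f'_k \mathfrak{z}^{(k)}$ is a homogeneous element of degree $g$ in $\ker\,\fpmapping$, and write it as $\sum_{j \in J_g} r_{k,j} f_j \mathfrak{e}^{(j)}$ (again using homogeneity plus right-cancellativity). Then set $\mathfrak{z}_g^{(k)} := \sum_{j \in J_g} r_{k,j} \mathfrak{e}_g^{(j)}$; by construction $\fpmapping_g(\mathfrak{z}_g^{(k)}) = \fpmapping(f'_k \mathfrak{z}^{(k)}) = 0$. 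Let $\mathfrak{Z}_g$ be the (finite) collection of these $\mathfrak{z}_g^{(k)}$.

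To see that $\mathfrak{Z}_g$ generates $\ker\,\fpmapping_g$, I would take $x = \sum_{j \in J_g} c_j \mathfrak{e}_g^{(j)} \in \ker\,\fpmapping_g$, lift it to $x' := \sum_{j \in J_g} c_j f_j \mathfrak{e}^{(j)} \in R[G]^n$ (which is homogeneous of degree $g$ and lies in $\ker\,\fpmapping$), expand $x' = \sum_k \nu_k \mathfrak{z}^{(k)}$ in terms of the kernel generators, and once more invoke homogeneity plus right-cancellativity to force each $\nu_k = r'_k f'_k$ with $r'_k \in R$, with $\nu_k = 0$ whenever $\deg(\mathfrak{z}^{(k)}) \not\preceq g$. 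Matching coefficients of $f_j \mathfrak{e}^{(j)}$ on both sides of $x' = \sum_k r'_k f'_k \mathfrak{z}^{(k)}$ then yields $x = \sum_k r'_k \mathfrak{z}_g^{(k)}$. The main obstacle is precisely this homogeneity bookkeeping: one must verify at each step that right-cancellativity and anti-symmetry jointly pin down the monomial structure of elements of $R[G]$, and that the substitution $f_j \mathfrak{e}^{(j)} \leftrightarrow \mathfrak{e}_g^{(j)}$ commutes with the relevant summations; everything else is a direct transcription of the proof for $G = \N$.
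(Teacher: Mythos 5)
Your proposal is correct and follows essentially the same route as the paper's proof: the same surjection $\fpmapping_g:R^{n_g}\to M_g$ built from the unique (by right-cancellativity) elements $f_j$ with $f_j\star\deg(\mathfrak{g}^{(j)})=g$, the same construction of the kernel generators $\mathfrak{z}_g^{(k)}$ from $f'_k\mathfrak{z}^{(k)}$, and the same coefficient comparison after lifting $x$ to a homogeneous $x'\in R[G]^n$. Your explicit remarks on how homogeneity and cancellativity pin down the coefficients $r_jf_j$ are a welcome elaboration of a step the paper leaves implicit.
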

\begin{proof}
We argue first that $M_{g}$ is finitely generated.  Let $n_{g}$ denote the number of elements $\mathfrak{g}^{(j)}$
in $\mathfrak{G}$ such that $\deg(\mathfrak{g}^{(j)})\preceq g$.
Let $h_{j}\in G$ such that $h_{j}\star\deg(\mathfrak{g}^{(j)})=g$.
Define the map $\fpmapping_{g}:R^{n_{g}}\rightarrow M_{g}$, by mapping
the $j$th generator $\mathfrak{e}_{g}^{(j)}$ of $R^{n_{g}}$ to
the element $h_{j}\mathfrak{g}^{(j)}$. Then the map $\fpmapping_{g}$
is surjective, proving that $M_{i}$ is finitely generated.

We show that $\ker\,\fpmapping_{g}$ is finitely generated as well.
Let $\mathfrak{e}^{(1)},\ldots,\mathfrak{e}^{(n)}$ be the gene\-rators
of $R[G]^{n}$ mapping to $\mathfrak{g}^{(1)},\ldots,\mathfrak{g}^{(n)}$
under $\fpmapping$. Let $m_{g}$ denote the number of elements in
$\mathfrak{Z}$ such that $\deg(\mathfrak{z}^{(j)})\preceq g$. Let
$h_{j}'\in G$ such that $h_{j}'\star\deg(\mathfrak{z}^{(j)})=g$.
For every $\mathfrak{z}^{(j)}$ with $1\leq j\leq m_{g}$, consider
$h_{j}'\mathfrak{z}^{(j)}$, which can be written as 
\[
h_{j}'\mathfrak{z}^{(j)}=\sum_{k=1}^{n_{g}}r_{k}h_{k}\mathfrak{e}^{(k)}
\]
with $r^{(k)}\in R$. Now, define 
\[
\mathfrak{z}_{g}^{(j)}:=\sum_{k=1}^{n_{g}}r_{k}\mathfrak{e}_{g}^{(k)}
\]
and $\mathfrak{Z}_{g}:=\{\mathfrak{z}_{g}^{(j)}\mid1\leq i\leq m_{g}\}$.
We claim that $\mathfrak{Z}_{g}$ generates $\ker\,\fpmapping{}_{g}$.
First of all, we get $\fpmapping_{g}(\mathfrak{z}_{g}^{(j)})=\fpmapping(\mathfrak{z}^{(j)})=0$.
Now fix $x\in\ker\,\fpmapping_{g}$ arbitrarily. Then, $x$ is a linear
combination of elements in $\{\mathfrak{e}_{g}^{(1)},\ldots,\mathfrak{e}_{g}^{(n_{g})}\}$
with coefficients in $R$. Replacing $\mathfrak{e}_{g}^{(j)}$ with
$h_{j}\mathfrak{e}^{(j)}$, we obtain $x'\in R[G]^{n}$ homogeneous
of degree $g$. By assumption, we can write $x'$ as linear combination
of elements in $\mathfrak{Z}$, that is, 
\[
x'=\sum_{k=1}^{m_{g}}r'_{k}h_{k}'\mathfrak{z}^{(k)}
\]
with $r'_{k}\in R$. Then, it holds that 
\[
x=\sum_{k=1}^{m_{g}}r'_{k}\mathfrak{z}_{g}^{(k)},
\]
which follows by comparing coefficients: let $j\in\{1,\ldots,n_{g}\}$
and let $c_{j}\in R$ be the coefficient of $\mathfrak{e}_{g}^{(j)}$
in $x$. Let $c'_{j}$ be the coefficient of $\mathfrak{e}_{g}^{(j)}$
in the sum $\sum_{k=1}^{m_{g}}r'_{k}\mathfrak{z}_{g}^{(k)}$, expanding
each $\mathfrak{z}_{g}^{(k)}$ by its linear combination as above.
By construction, $c_{j}$ is the coefficient of $h_{k}\mathfrak{e}^{(j)}$
in $x'$, and $c'_{j}$ is the coefficient of $h_{k}\mathfrak{e}^{(j)}$
in the sum $\sum_{k=1}^{m_{i}}r'_{k}h_{k}'\mathfrak{z}^{(k)}$. Since
this sum equals $x'$, it follows that $c_{j}=c'_{j}$. Since $x$
was chosen arbitrarily from $\ker\,\fpmapping_{g}$, it follows that
$\mathfrak{Z}_{g}$ generates the kernel. 
\end{proof}
\begin{lemma}
\label{lem:beta_lemma-G} $\beta(\rgmodname)$ is of finite type.
In particular, it is of finitely presented type with Lemma~\ref{lem:comp_are_finitely_presented-G}. 
\end{lemma}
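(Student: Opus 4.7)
The plan is to mimic the proof of Lemma~\ref{lem:beta_lemma}, replacing the single scalar threshold $D$ used there by a finite set of ``saturation points'' in $G$. Let $D := \{\deg(\mathfrak{g}^{(j)}):1\le j\le n\}\cup\{\deg(\mathfrak{z}^{(k)}):1\le k\le m\}$, a finite subset of $G$, and let $H$ be the union, over all subsets $T\subseteq D$, of the partially least common multiples of $T$. Weak plcm together with the finiteness of $D$ makes $H$ finite, and $e\in H$ as the plcm of $\emptyset$. I claim that $H$ is a framing set for $\beta(\rgmodname)$, which together with Lemma~\ref{lem:comp_are_finitely_presented-G} will yield the ``in particular'' part of the statement.

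For $g\in G$ set $T_g:=\{d\in D:d\preceq g\}$. Since $g$ is a common multiple of $T_g$, I pick a plcm $h$ of $T_g$ satisfying $h\preceq g$, so that $h\in H$. Because $h\preceq g$ gives $T_h\subseteq T_g$ and because $h$ is itself a common multiple of $T_g$ gives $T_g\subseteq T_h$, one obtains $T_h=T_g$; the same sandwich argument yields $T_{\tilde g}=T_h$ for every $\tilde g$ with $h\preceq\tilde g\preceq g$.

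The central claim is that $h$ is a frame of $g$, i.e.\ that $\varphi_{h,\tilde g}$ is an isomorphism for every such $\tilde g$. Let $f\in G$ be the unique element (by right-cancellativity) with $f\star h=\tilde g$, so $\varphi_{h,\tilde g}$ is multiplication by $X^f$. For surjectivity, lift $y\in M_{\tilde g}$ to a homogeneous $\tilde y=\sum\lambda_j\mathfrak{e}^{(j)}\in R[G]^n$; only indices with $\deg(\mathfrak{g}^{(j)})\in T_{\tilde g}=T_h$ contribute, and for each such $j$ the relation $\deg(\lambda_j)\star\deg(\mathfrak{g}^{(j)})=f\star h$ together with right-cancellativity and the identity $rX^g=X^gr$ factor the homogeneous coefficient $\lambda_j$ as $X^f\lambda'_j$, so $\tilde y=X^f\sum\lambda'_j\mathfrak{e}^{(j)}$ and $\fpmapping\bigl(\sum\lambda'_j\mathfrak{e}^{(j)}\bigr)\in M_h$ is a preimage of $y$. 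For injectivity, lift $x\in M_h$ with $\varphi_{h,\tilde g}(x)=0$ to $\tilde x\in R[G]^n$ of degree $h$, write $X^f\tilde x=\sum\nu_k\mathfrak{z}^{(k)}$ homogeneously in $R[G]^n$, observe that only relations with $\deg(\mathfrak{z}^{(k)})\preceq h$ can appear, and factor each $\nu_k$ similarly as $X^f\nu'_k$; left-cancellativity of $G$ lifts to left-cancellativity by $X^f$ componentwise in $R[G]^n$, so $\tilde x=\sum\nu'_k\mathfrak{z}^{(k)}\in\ker\fpmapping$ and hence $x=0$.

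The main obstacle is the descent step: the weak plcm axiom bounds only the number of plcms and not their existence below a given common multiple. One must argue that the poset of common multiples of $T_g$ inside $[e,g]$ admits a minimal element; this is immediate for the concrete good monoids listed in Section~\ref{sec:prelim_monoid} (in $\N^k$ the componentwise maximum of $T_g$ serves as $h$, in $(\Q_{\geq 0},+)$ the ordinary maximum) and can be extracted as a short auxiliary lemma in general. Everything else is the cancellation bookkeeping from Lemma~\ref{lem:beta_lemma} transferred from $\N$ to $G$ via the unique factorisations afforded by left- and right-cancellativity.
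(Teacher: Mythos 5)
Your proposal is correct and follows essentially the same route as the paper: the same finite set $H$ of plcms of all subsets of the degree set $D$, the same choice of a plcm $h$ of $D'(g)$ below $g$ together with the observation $D'(h)=D'(\tilde g)=D'(g)$, and the same surjectivity and injectivity arguments via right- and left-cancellativity. The ``descent'' step you flag --- producing a plcm of $D'(g)$ that lies below the given common multiple $g$ --- is asserted with the same brevity in the paper's own proof (``Hence, there exists a plcm $h$ of $D'(g)$ such that $h\preceq g$''), so your attempt introduces no gap beyond what the paper itself leaves implicit.
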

\begin{proof}
Define 
\[
D:=\left\{ \deg(\mathfrak{g}^{(j)})  \mid1\leq j\leq n\right\} \cup \left\{\deg(\mathfrak{z}^{(k)})  1\leq k\leq m\right\} .
\]

For every subset $D'$ of $D$, let $plcm(D')$ denote the set of partially least
common multiples of $D'$. Then, set 
\[
H:=\bigcup_{D'\subseteq D}plcm(D').
\]
Note that $e\in H$ (using $D'=\emptyset$) and $D\subset H$ (when
$D'$ ranges over the singleton sets). Also, $H$ is finite because
$D$ is finite and $G$ is weak plcm.

We claim that $H$ frames $\beta(\rgmodname)$. Let $g\in G$ be arbitrary.
We have to find an element $h\in H$ such that $h\preceq g$ and for all $h\preceq \tilde{g}\preceq g$ 
with $\tilde{f}\star h=\tilde{g}$, multiplication with $\tilde{f}$ is an isomorphism $M_{h}\rightarrow M_{\tilde{g}}$.

If $g\in H$, that claim is trivial using $h:=g$ and $\tilde{f}:=e$. So,
let us assume that $g\notin H$. Let $D'(g):=\{\ell\in D\mid\ell\preceq g\}$.
Then, $g$ is a common multiple of $D'(g)$. However, it is not a partially
least common multiple because in that case, it would belong to $H$.
Hence, there exists a plcm $h$ of $D'(g)$ such that $h\preceq g$.
Note also that $D'(g)=D'(h)$ in this case. Let $h\preceq \tilde{g}\preceq g$ and 
$\tilde{f}$ be such that $\tilde{f}\star h=\tilde{g}$.

We first show that multiplication by $\tilde{f}$ is surjective. For that,
fix some $x\in M_{\tilde{g}}$. Then $x$ is generated by a linear combination
of the generators $\mathfrak{g}^{(j)}$, and among those, only those
in $D'(\tilde{g})=D'(g)$ can have non-zero coefficients. Hence, $x$ takes the
form 
\[
x=\sum_{j\in\{1,\ldots,n\},{\deg(\mathfrak{g}^{(j)})\in D'(g)}}f_{j}r_{j}\mathfrak{g}^{(j)}
\]
with $r_{j}\in R$ and $f_{j}\in G$ such that $f_{j}\star\deg(\mathfrak{g}^{(j)})=\tilde{g}$.
Let $f'_{j}$ be such that $f'_{j}\star\deg(\mathfrak{g}^{(j)})=h$.
Then, using $\tilde{f}$ from above, 
\[
\tilde{f}\star f'_{j}\star\deg(\mathfrak{g}^{(j)})=\tilde{g}=f_{j}\star\deg(\mathfrak{g}^{(j)}),
\]
which implies that $\tilde{f}\star f'_{j}=f_{j}$ because $G$ is right-cancellative.
Hence we can write 
\begin{align*}
x & =\sum_{j\in\{1,\ldots,n\},{\deg(\mathfrak{g}^{(j)})\in D'(g)}}(\tilde{f}\star f'_{j})r_{j}\mathfrak{g}^{(j)}\\
 & =\tilde{f}\sum_{j\in\{1,\ldots,n\},{\deg(\mathfrak{g}^{(j)})\in D'(g)}}f'_{j}r_{j}\mathfrak{g}^{(j)},
\end{align*}
which shows that $x$ has a preimage in $M_{h}$.

For the injectivity of multiplication with $\tilde{f}$, let $y\in M_{h}$
such that $\tilde{f}y=0$. Let $x\in R[G]^{n}$ such that $\fpmapping(x)=y$.
Then $\fpmapping(\tilde{f}x)=\tilde{f}y=0$ and hence, $\tilde{f}x$ is generated by the $\mathfrak{z}^{(k)}$.
Similar as in the first part of the proof, only such $\mathfrak{z}^{(k)}$
can appear whose degree lies in $D'(g)$. Hence, writing 
\[
\tilde{f}x=\sum_{k\in\{1,\ldots,m\},{\deg(\mathfrak{z}^{(k)})\in D'(g)}}f_{k}r_{k}\mathfrak{z}^{(k)},
\]
we know that $f_{k}\star\deg(\mathfrak{z}^{(k)})=\tilde{g}$.
The same way as in the first part of the proof, we get that $f_{k}=\tilde{f}\star f'_{k}$
with some $f'_{k}\in G$. It follows that 
\[
\tilde{f}x=\tilde{f}\sum_{k\in\{1,\ldots,m\},{\deg(\mathfrak{z}^{(k)})\in D'(g)}}f'_{k}r_{k}\mathfrak{z}^{(k)}.
\]
Since $G$ is left-cancellative, $R\left[G\right]$ is left-cancellative
with respect to multiplication by any monoid element. This property carries over to the free module $R[G]^n$ and it follows that
\[
x=\sum_{k\in\{1,\ldots,m\},{\deg(\mathfrak{z}^{(k)})\in D'(g)}}f'_{k}r_{k}\mathfrak{z}^{(k)}.
\]
Thus $x$ can be written as linear combination of the $\mathfrak{z}^{(k)}$,
proving that $y=\fpmapping(x)=0$. 
\end{proof}

\paragraph{The Generalized Representation Theorem.}

The preceding lemmas imply the Genera\-lized Representation Theorem: 
\begin{theorem}
\label{thm:representation_theorem_monoids} Let $R$ be a ring with unity. The
category of finitely presented graded $R\left[G\right]$-modules is
isomorphic to the category of generalized algebraic persistence mo\-dules
over $R$ of finitely presented type. 
\end{theorem}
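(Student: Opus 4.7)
The plan is to combine the already-established pieces: Lemma~\ref{lem:cat_equiv_general-G} gives the isomorphism between the unrestricted categories $\gampcat$ and $\rgmodcat$ via the functor pair $(\alpha,\beta)$, and the three finiteness lemmas show that each of $\alpha$, $\beta$ restricts correctly to the relevant finitely presented subcategories. The theorem then follows by a routine restriction argument, in complete parallel with the proof of Theorem~\ref{thm:representation_theorem_naturals}.

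First, I would note that both categories in the statement are full subcategories of $\gampcat$ and $\rgmodcat$ respectively. Because fullness is automatic, checking that the restrictions of $\alpha$ and $\beta$ remain functors reduces to verifying that they carry objects of the distinguished type on one side to objects of the distinguished type on the other. Morphism-level considerations and the functorial identities $\alpha\circ\beta=\mathrm{id}$ and $\beta\circ\alpha=\mathrm{id}$ are inherited directly from Lemma~\ref{lem:cat_equiv_general-G}, so no further work is needed there.

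Next, I would invoke Lemma~\ref{lem:alpha_lemma-G} to dispatch the $\alpha$ direction: if $\gapmname$ is of finitely presented type, then $\alpha(\gapmname)$ is a finitely presented graded $R[G]$-module. For the $\beta$ direction, I would combine Lemma~\ref{lem:comp_are_finitely_presented-G} (each component $M_{g}$ is finitely presented over $R$) with Lemma~\ref{lem:beta_lemma-G} (the resulting \gapm admits a finite framing set), which together say that $\beta$ applied to a finitely presented graded $R[G]$-module is a \gapm of finitely presented type. Together with the identities inherited from Lemma~\ref{lem:cat_equiv_general-G}, this yields the desired isomorphism of subcategories.

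The proof itself presents no real obstacle; the entire technical difficulty was absorbed into the preceding lemmas. In particular, Lemma~\ref{lem:beta_lemma-G} is where the ``good monoid'' hypothesis does its work: the weak plcm property produces a finite framing set $H$ built from the finitely many degrees of generators and relations, anti-symmetry ensures $\preceq$ is a genuine partial order, and cancellativity underpins both the surjectivity and the injectivity of multiplication by the connecting element $\tilde{f}$. Once those structural results are in hand, the Generalized Representation Theorem is simply their formal assembly.
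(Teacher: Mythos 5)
Your proposal is correct and follows exactly the paper's own argument: the isomorphism of the full categories from Lemma~\ref{lem:cat_equiv_general-G} is restricted to the finitely presented subcategories, with Lemma~\ref{lem:alpha_lemma-G} handling the $\alpha$ direction and Lemmas~\ref{lem:comp_are_finitely_presented-G} and~\ref{lem:beta_lemma-G} handling the $\beta$ direction. No gap to report.
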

\begin{proof}
The categories are subcategories of $\rgmodcat$ and $\gampcat$,
respectively. Since $\alpha$ and $\beta$, restricted to these subcategories,
map a $\gapm$ of finitely presented type to a finitely presented graded 
$R[G]$-module (Lemma~\ref{lem:alpha_lemma-G}) and vice versa 
(Lemma~\ref{lem:comp_are_finitely_presented-G} and Lemma~\ref{lem:beta_lemma-G}),
these categories are isomorphic. 
\end{proof}
Clearly, this statement contains Theorem~\ref{thm:representation_theorem_naturals}
using $G:=\N$. 

As in Theorem~\ref{thm:representation_theorem_naturals}, replacing
``finitely presented'' with ``finitely generated'' invalidates
the claim. Passing to a Noetherian ring $R$, however, does not immediately
revalidate it, because $R[G]$ is not Noetherian in general. Still,
the following statement follows easily: 
\begin{corollary}
If $R$ and $R\left[G\right]$ are Noetherian rings with unity, the
category of finitely generated graded $R[G]$-modules is isomorphic
to the category of discrete algebraic persistence modules of finitely
generated type. 
\end{corollary}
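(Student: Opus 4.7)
The plan is to reduce directly to Theorem~\ref{thm:representation_theorem_monoids} by showing that the two Noetherian hypotheses collapse the finitely generated subcategories onto the finitely presented subcategories on each side of the $(\alpha,\beta)$-correspondence. Once this collapse is justified, the claimed isomorphism is just a relabelling of the Generalized Representation Theorem, and no new argument involving $\alpha$ or $\beta$ is needed.

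On the graded module side the step is immediate: since $R[G]$ is Noetherian, Lemma~\ref{lem:noethersch} yields that every finitely generated $R[G]$-module is finitely presented, while the converse inclusion is trivial. Hence the full subcategory of $\rgmodcat$ spanned by finitely generated graded modules agrees with the one spanned by finitely presented graded modules.

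On the \gapm\ side I would argue componentwise. By definition, a \gapm\ $\gapmname$ is of finitely generated (resp.\ finitely presented) type iff it admits a finite framing set and each $M_{g}$ is finitely generated (resp.\ finitely presented) as an $R$-module. The framing condition is common to both notions, and since $R$ is Noetherian, Lemma~\ref{lem:noethersch} forces each finitely generated $M_{g}$ to be finitely presented. Consequently the two finiteness conditions single out the same full subcategory of $\gampcat$. Combining this with the previous paragraph and Theorem~\ref{thm:representation_theorem_monoids} then completes the argument.

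The only point worth flagging~-- rather than a genuine obstacle~-- is the necessity of assuming that $R[G]$ itself, and not merely $R$, is Noetherian. In contrast to the polynomial case treated in Section~\ref{sec:representation_over_N}, Hilbert's Basis Theorem does not extend to monoid rings over arbitrary good monoids (for instance $R[\Q_{\geq 0}]$ is not Noetherian even when $R$ is a field, since it is not even finitely generated as an $R$-algebra), so Noetherianity of $R[G]$ is an independent hypothesis that cannot be dropped or deduced from Noetherianity of $R$.
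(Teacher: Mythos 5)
Your proposal is correct and matches the paper's intended argument: the paper gives no explicit proof beyond ``the following statement follows easily,'' and the analogous corollary over $\N$ is proved exactly by invoking Lemma~\ref{lem:noethersch} to identify finitely generated with finitely presented on both sides and then restating the Representation Theorem. Your closing remark on why Noetherianity of $R[G]$ must be assumed separately (no Hilbert Basis Theorem for general good monoids) is a correct and worthwhile observation that the paper only makes implicitly.
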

We remark that Hilbert's Basis Theorem can be applied iteratively (see \cite{vdw93}, 15.1). Using 
$R\left[\mathbb{N}^{k}\right]\cong R\left[t_{1},...,t_{k}\right]\cong R\left[t_{1},...,t_{k-1}\right]\left[t_{k}\right]$, we obtain 
\begin{corollary}
Let $R$ be a commutative Noetherian ring with unity, $k\in\mathbb{N}$.
The category of finitely generated graded $R\left[\mathbb{N}^{k}\right]$-modules
is isomorphic to the category of algebraic persistence modules over
$\mathbb{N}^{k}$ of finitely generated type. 
\end{corollary}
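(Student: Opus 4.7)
The plan is to reduce the corollary to the Generalized Representation Theorem (Theorem~\ref{thm:representation_theorem_monoids}) by showing that, under the stated hypotheses, the condition ``finitely generated'' and the condition ``finitely presented'' coincide on both sides of the correspondence. All the hard work has already been done; what remains is a bookkeeping argument together with the observation that Noetherianity propagates from $R$ to $R[\mathbb{N}^k]$.

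First I would establish that $R[\mathbb{N}^k]$ is Noetherian. Using the isomorphism $R[\mathbb{N}^k]\cong R[t_1,\ldots,t_k]$ noted in Section~\ref{sec:prelim_monoid} and the decomposition $R[t_1,\ldots,t_k]\cong R[t_1,\ldots,t_{k-1}][t_k]$, an induction on $k$ combined with Hilbert's Basis Theorem (the second part of Lemma~\ref{lem:noethersch}) yields that $R[\mathbb{N}^k]$ is a commutative Noetherian ring with unity. The base case $k=0$ is $R$ itself, and the inductive step invokes Hilbert's Basis Theorem once per coordinate.

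Next I would use the first part of Lemma~\ref{lem:noethersch} twice. Applied to $R[\mathbb{N}^k]$, it says every finitely generated graded $R[\mathbb{N}^k]$-module is finitely presented, so the category of finitely generated graded $R[\mathbb{N}^k]$-modules coincides with the category of finitely presented graded $R[\mathbb{N}^k]$-modules. Applied to $R$, it says every finitely generated $R$-module is finitely presented, so for any generalized persistence module $\gapmname$ of finite type the componentwise condition ``each $M_g$ is finitely generated'' is equivalent to ``each $M_g$ is finitely presented''. Consequently, the categories of $\gapms$ of finitely generated type and of finitely presented type over $R$ coincide.

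Finally, Theorem~\ref{thm:representation_theorem_monoids} with $G=\mathbb{N}^k$ supplies an isomorphism between finitely presented graded $R[\mathbb{N}^k]$-modules and $\gapms$ of finitely presented type over $R$, and by the two identifications just made this is exactly the desired isomorphism between finitely generated graded $R[\mathbb{N}^k]$-modules and $\gapms$ of finitely generated type over $R$. I do not anticipate any genuine obstacle; the only place where care is needed is checking that the induction on $k$ for Hilbert's Basis Theorem really applies, i.e.\ that commutativity and the presence of unity in $R$ are preserved under each successive polynomial extension, which is immediate.
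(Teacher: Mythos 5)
Your proposal is correct and follows essentially the same route as the paper: the authors also obtain this corollary by iterating Hilbert's Basis Theorem via $R[\mathbb{N}^{k}]\cong R[t_{1},\ldots,t_{k-1}][t_{k}]$ to get Noetherianity of $R[\mathbb{N}^k]$, and then use Lemma~\ref{lem:noethersch} to identify ``finitely generated'' with ``finitely presented'' on both sides before invoking Theorem~\ref{thm:representation_theorem_monoids}. No gaps.
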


\section{Conclusion}

\label{sec:conclusion} The Representation Theorem for persistence
modules is one of the landmark results in the theory of persistent
homology. In this paper, we formulated a more precise statement of
this classical theorem over arbitrary rings with unity 
where we replaced finite generation with finite
presentation. We provided a proof which only relies
on elementary module theory. Furthermore, we generalized the Representation
Theorem from naturally indexed modules to modules indexed over a more
general class of monoids. The key difficulty was to find the right
finiteness condition for persistence modules in this case to certify
the equivalence with finitely presented modules graded over the monoid. 
Since the underlying ring does not have to be commutative, the Representation 
Theorem should rather be considered as a statement on general 
persistence, including the case of persistent homology.

Alternatively, in categorical language, persistence modules over $\N$
are functors $\mathbb{N}\rightarrow R\mbox{-}{\bf Mod}$
where $\mathbb{N}$ is interpreted as the totally ordered set $\left(\mathbb{\mathbb{N}},\leq\right)$.
For the Representation Theorem we used that $\mathbb{N}$ is a monoid
and endowed the functor $\mathbb{N}\rightarrow R\mbox{-}{\bf Mod}$
with an algebraic structure $R\left[t\right]$. We showed that the
generalization of this situation is replacing $\mathbb{N}$ with a
poset $\left(G,\preceq\right)$ that is induced by left factorization in a good monoid $(G,\star)$.
Moreover, we defined the subcategory of finite-type-functors $G\rightarrow R\mbox{-}{\bf Mod}$
precisely. The concept of frames enables us to locate
the persistent features in the monoid concretely.  

An obvious question is how the finiteness condition changes when the
requirements on the monoid are further relaxed. The anti-symmetry
of the monoid is not a severe assumption because elements $g_{1},g_{2}$
with $g_{1}\preceq g_{2}$ and $g_{2}\preceq g_{1}$ induce isomorphic 
connecting maps in the corresponding $\gapm$ and therefore can be treated 
as one component of the monoid. Also, we can relax the right-cancellative
condition such that between any two elements $g_{1},g_{2}\in G$,
there are at most finitely many $h$ such that $h\star g_{1}=g_{2}$.
We restricted to good monoids for clarity of exposition. If applications occur 
that require filtrations with some complicated ordering structure,
we suggest to consider a good monoid as underlying index set. 

A next step is to find classification results, parametrizations and discrete invariants
for finitely presented monoid-graded modules.  It might be useful to 
consider a subclass of good monoids, e.g., finitely generated good monoids or 
monoids with a manageable factorization theory.
The theory of persistence modules is well-studied in the case of 
$\mathbb{N}^{k}$ and might be transferred to more general monoids, 
as much as the situation of discrete invariants. Since there is no natural notion of 
barcodes in the case of $\mathbb{N}^{k}$, there is also no hope of finding 
barcodes for more general monoids. But it might be possible to generalize a
discrete invariant from $\mathbb{N}^{k}$, for instance the discrete
rank invariant, and generalize algorithms to $G$-indexed persistence. 

\paragraph{Acknowledgments.} The authors are supported by the Austrian Science Fund (FWF) grant number P 29984-N35.

\newpage

\appendix

\section*{Appendix}

\section{Details from Section~\ref{sec:representation_over_N}} \label{appendix_a}

\paragraph{Proof of functoriality of $\alpha$.}

Recall that for $\xi_{\ast}=(\xi_{i})_{i\in\mathbb{N}}$, a $\dapm$
morphism between $\dapmname$ and $\otherdapmname$, we define $\alpha(\xi_{\ast})$
as the map assigning to $(m_{i})_{i\in\mathbb{N}}\in\oplus M_{i}$
the value $(\xi_{i}(m_{i}))_{i\in\mathbb{N}}$. Let us define $f:=\alpha(\xi_{\ast})$
for shorter notation. Indeed, $f$ is a graded module morphism: it
is a group homomorphism, as each $\xi_{i}$ is, it clearly satisfies
$f(M_{i})\subset N_{i}$, and it holds that for $m=(m_{0},m_{1},\ldots)$
we get 
\begin{eqnarray*}
f(tm) & = & f(0,tm_{0},tm_{1},\ldots)=(0,\xi_{1}(tm_{0}),\xi_{2}(tm_{1}),\ldots)\\
 & = & (0,t\xi_{0}(m_{0}),t\xi_{1}(m_{1}),\ldots)=t(\xi_{i}(m_{i}))_{i\in\mathbb{N}}=tf(m),
\end{eqnarray*}
where the third equality comes from the property of $\dapm$ morphisms.

For the functorial properties, it is clear that $\alpha$ maps the
identity $\dapm$ morphism to the identity of the corresponding graded modules. If $\xi_{*}$, $\xi'_{\ast}$ are
$\dapm$ morphisms and $m$ as before, we calculate 
\begin{eqnarray*}
  (\alpha(\xi'_{\ast}\circ\xi_{\ast}))(m) & = & (\xi'_{i}(\xi_{i}(m_{i})))_{i\in\mathbb{N}}\\
 & = & \alpha(\xi'_{\ast})(\xi_{i}(m_{i}))_{i\in\mathbb{N}}\\
 & = & (\alpha(\xi'_{\ast})\circ\alpha(\xi_{\ast}))(m).
\end{eqnarray*}

\paragraph{Proof of functoriality of $\beta$.}

Fix two graded $R[t]$-modules $\oplus M_{i}$ and $\oplus N_{i}$
and let $\dapmname:=\beta(\oplus M_{i})$ and $\otherdapmname:=\beta(\oplus N_{i})$.
A graded morphism $\eta:\oplus M_{i}\rightarrow\oplus N_{i}$ implies
a sequence of maps $(\xi_{i})_{i\in\mathbb{N}}:=\beta(\eta)$ where
$\xi_{i}$ is just the restriction of $\eta$ to $M_{i}$. It follows
that $\beta$ gives group homomorphisms $\xi_{i}:M_{i}\rightarrow N_{i}$
which are also $R$-module morphisms because for $r\in R$, $m_{i}\in M_{i}$
we get $\xi_{i}(rm_{i})=\eta(rm_{i})=r\eta(m_{i})=r\xi_{i}(m_{i})$. Since the
connecting maps in $\dapmname$ and $\otherdapmname$ are induced
by multiplication with $t$, we have that for $m_{i}\in M_{i}$, 
\[
\xi_{i+1}(tm_{i})=\eta(tm_{i})=t\eta(m_{i})=t\xi_{i}(m_{i}),
\]
proving that $\beta(\eta)$ is indeed a $\dapm$ morphism.

For functoriality, it is again clear that $\beta$ maps the identity
to the identity morphism. For two graded $R[t]$-module morphisms
$\eta,\eta'$ and any sequence $(m_{i})_{i\in\mathbb{N}}$ with $m_{i}\in M_{i}$ for all $i\in\N$,
we get 
\begin{eqnarray*}
  (\beta(\eta'\circ \eta))(m_{i})_{i\in\mathbb{N}} &= & (\eta'(\eta(m_{i}))_{i\in\mathbb{N}}\\
 & = & \beta(\eta')(\eta(m_{i}))_{i\in\mathbb{N}}\\
 & = & (\beta(\eta')\circ\beta(\eta))(m_{i})_{i\in\mathbb{N}}.
\end{eqnarray*}

\section{Details from Section~\ref{sec:representation_over_G}} \label{appendix_b}

\paragraph{Proof of Lemma \ref{lem:cat_equiv_general-G}.}
For a $\gapm$ morphism
$\left(\xi_{g}:M_{g}\rightarrow N_{g}\right)_{g\in G}$ 
be\-tween two $\gapms$ $\gapmname$ and $\othergapmname$, we define
\[
\alpha(\xi_{G}):\bigoplus_{g\in G}M_{g}\rightarrow\bigoplus_{g\in G}N_{g},(m_{g})_{g\in G}\mapsto(\xi_{g}(m_{g}))_{g\in G}.
\]

Recall that for $\xi_{G}=(\xi_{g})_{g\in G}$, a $\gapm$ morphism
between $\gapmname$ and $\othergapmname$, we define $\alpha(\xi_{G})$
as the map assigning to $(m_{g})\in\oplus M_{g}$ the value $(\xi_{g}(m_{g}))$.
Let us define $f:=\alpha(\xi_{G})$ for shorter notation. Again,
$f$ is a graded module morphism: it is a group homomorphism, as each
$\xi_{g}$ is, it clearly satisfies $f(M_{g})\subset N_{g}$, and it holds that 
 for $m=(m_{g})_{g\in G}$ and $h\in G$ we get
\begin{eqnarray*}
f(hm) & = & f\left(\left(\begin{cases}
hm_{\widetilde{g}}; & \exists\widetilde{g}\in G:h\star\widetilde{g}=g\\
0; & \text{otherwise}
\end{cases}\right)_{g\in G}\right)\\
 & = & \left(\left(\begin{cases}
\xi_{h\star\widetilde{g}}(hm_{\widetilde{g}}); & \exists\widetilde{g}\in G:h\star\widetilde{g}=g\\
0; & \text{otherwise}
\end{cases}\right)_{g\in G}\right)\\
 & = & \left(\left(\begin{cases}
h\xi_{\widetilde{g}}(m_{\widetilde{g}}); & \exists\widetilde{g}\in G:h\star\widetilde{g}=g\\
0; & \text{otherwise}
\end{cases}\right)_{g\in G}\right)\\
 & = & \left(\left(\begin{cases}
h\xi_{\widetilde{g}}(m_{\widetilde{g}}); & \exists\widetilde{g}\in G:h\star\widetilde{g}=g\\
0; & \text{otherwise}
\end{cases}\right)_{g\in G}\right)\\
 & = & h\left(\xi_{g}(m_{g})\right)_{g\in G} \\
 & = & hf(m),
\end{eqnarray*}
where the third equality comes from the property of $\gapm$ morphisms. 

For the functorial properties, it is clear that $\alpha$ maps the
identity $\gapm$ morphism to the identity of the corresponding graded module. If $\xi_{G}$, $\xi'_{G}$ are
$\gapm$ morphisms and $m$ as before, we calculate 
\begin{eqnarray*}
  (\alpha(\xi'_{\ast}\circ\xi_{\ast}))(m) & = & \left(\xi'_{g}(\xi_{g}(m_{g}))\right)_{g\in G}\\
 & = & \alpha(\xi'_{\ast})\left(\xi_{g}(m_{g})\right)_{g\in G}\\
 & = & (\alpha(\xi'_{\ast})\circ\alpha(\xi_{\ast}))(m).
\end{eqnarray*}

Vice versa, a morphism 
\[
\eta:\bigoplus_{g\in G}M_{g}\rightarrow\bigoplus_{g\in G}N_{g}
\]
in $\rgmodcat$ induces a homomorphism $\eta_{g}:M_{g}\rightarrow N_{g}$
for each $g\in G$, and these induced maps are compatible with multiplication
of each $h\in G$. Hence, the diagram
\[
\begin{xy}\xymatrix{M_{g}\ar[r]^{h}\ar[d]^{\xi_{g}} & M_{h\star g}\ar[d]^{\xi_{h\star g}}\\
N_{g}\ar[r]^{h} & N_{h\star g}
}
\end{xy}
\]
commutes and so, setting $\beta(\eta):=(\eta_{g})_{g\in G}$ yields
a $\gapm$ morphism be\-tween $\beta(\oplus_{g\in G}M_{g})$ and
$\beta(\oplus_{g\in G}N_{g})$. 

For functoriality, it is clear that $\beta$ maps the identity between graded modules
to the identity $\gapm$ morphism. For two graded $R[G]$-module morphisms
$\eta,\eta'$ and any $\left(m_{g}\right)_{g\in G}$ with $m_{g}\in M_{g}$
for all $g\in G$, we get 
\begin{eqnarray*}
 (\beta(\eta'\circ \eta))\left(m_{g}\right)_{g\in G} & = & \left(\eta'(\eta(m_{g})\right)_{g\in G}\\
 & = & \beta(\eta')\left(\eta(m_{g})\right)_{g\in G}\\
 & = & (\beta(\eta')\circ\beta(\eta))\left(m_{g}\right)_{g\in G}.
\end{eqnarray*}
Finally, the construction immediately implies that $\alpha\circ\beta$
equals the identity functor on $\rgmodcat$ and $\beta\circ\alpha$
equals the identity functor on $\gampcat$, hence $(\alpha,\beta)$ 
is an isomorphic pair of functors.

\section{Proving the ZC-Representation Theorem using Artin-Rees theory} \label{artin-rees-appendix}
We showed how to prove the Representation Theorem over general rings with unity and good monoids as index set. Artin-Rees theory is defined over commutative Noetherian rings with unity and filtrations of subsets of modules. Therefore it does not yield a proof for more general rings or, at least not immediately, for more general indexing monoids than $\N$. Since Artin-Rees theory is quoted in \cite{afra-book,ZC05} to provide a proof for Theorem \ref{thm:representation_theorem_naturals} (over commutative Noetherian rings with unity), we take a closer look at the connections between Artin-Rees theory and such a proof. To do this, we consider the following notion of filtrations of modules:

\begin{definition*}
	Let $A$ be a commutative Noetherian ring with unity. Let $I\subseteq A$ be an ideal, $M$ an $A$-module. An \emph{$I$-filtration of $M$} is a collection $(M_n)_{n\in\N}$ such that  $M=M_{0}\supseteq M_{1}\supseteq M_{2}\supseteq\ldots$ and $IM_{n}\subseteq M_{n+1}$ for all $n\in\N$. An $I$-filtration is called \emph{$I$-stable} if there is an $n_0\in\N$ such that $IM_{n}=M_{n+1}$ for all $n\geq n_0$.
\end{definition*}

%Stability of filtrations give a generalized setting for rephrasing stationary sequences in persistence modules.  % Further importance of this definition for persistence modules stems from the fact that given two stable $I$-filtration of $M$,  $\{M_n\}_{n\in\N}$, $\{N_n\}_{n\in\N}$, there exists an $n_0\in\N$ such that $M_{n+n_0}\subset N_n$ and $N_{n+n_0}\subset M_n$ for all $n\in\N$. This may be related to the definition of the interleaving distance of persistence modules.\\
%Of course one can also state this definition for modules over arbitrary rings. For a rich theory, however, the assumptions on $A$ are necessary.\\
Now, consider an ideal $I\subseteq A$ and an $I$-filtration $(M_n)_{n\in\N}$ of a finitely generated $A$-module $M$. Let $\overline{A}:=\overset{}{\underset{i\in\mathbb{N}}{\bigoplus}}I^{i}$, $\overline{M}:=\overset{}{\underset{i\in\mathbb{N}}{\bigoplus}}M_{i}$. We obtain a criterion for $(M_n)_{n\in\N}$ to be $I$-stable:

\begin{lemma*}[Criterion for stability] 
	Let $A$ be a commutative Noetherian ring with unity, $I\subseteq A$ an ideal. Let $M$ be a finitely generated $A$-module, $(M_n)_{n\in\N}$ an $I$-filtration of $M$. Then the following are equivalent: 
\begin{enumerate}
   \item $\overline{M}$  is a finitely generated  $\overline{A}$-module. 
   \item $(M_n)_{n\in\N}$  is  $I$-stable.
  \end{enumerate}
\end{lemma*}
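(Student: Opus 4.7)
The plan is to prove the two implications separately, exploiting the graded structure of the Rees algebra $\overline{A}=\bigoplus_{i\in\N}I^{i}$ and the graded $\overline{A}$-module $\overline{M}=\bigoplus_{i\in\N}M_{i}$. The key observation is that elements of $\overline{A}$ of degree $k$ are exactly elements of $I^{k}$, so multiplication on $\overline{M}$ sends $I^{k}\cdot M_{n}$ into $M_{n+k}$, which is compatible with the inclusion $I^{k}M_{n}\subseteq M_{n+k}$ required by the $I$-filtration.

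For the direction (2)$\Rightarrow$(1), suppose the filtration is $I$-stable with threshold $n_{0}$, so that $M_{n_{0}+k}=I^{k}M_{n_{0}}$ for all $k\geq 0$. Since $A$ is Noetherian and $M$ is finitely generated, every submodule $M_{i}$ is finitely generated over $A$. Pick a finite generating set $S_{i}\subseteq M_{i}$ for each $i\in\{0,1,\dots,n_{0}\}$ and let $S:=\bigcup_{i=0}^{n_{0}}S_{i}$, a finite subset of $\overline{M}$. I claim $S$ generates $\overline{M}$ over $\overline{A}$. A homogeneous element of degree $i\leq n_{0}$ is an $A$-combination of $S_{i}$, hence an $\overline{A}$-combination using the degree-zero piece $I^{0}=A$; a homogeneous element of degree $n_{0}+k$ with $k\geq 1$ lies in $I^{k}M_{n_{0}}$ and is therefore an $I^{k}$-combination of $S_{n_{0}}$, which is an $\overline{A}$-combination using the degree-$k$ piece $I^{k}$.

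For the direction (1)$\Rightarrow$(2), suppose $\overline{M}$ is finitely generated over $\overline{A}$. Using that $\overline{M}$ is graded, choose homogeneous generators $\xi_{1},\dots,\xi_{r}$ with $\xi_{j}\in M_{d_{j}}$, and set $n_{0}:=\max_{j}d_{j}$. Fix any $n\geq n_{0}$. The inclusion $IM_{n}\subseteq M_{n+1}$ is built into the filtration, so it suffices to show $M_{n+1}\subseteq IM_{n}$. Pick $m\in M_{n+1}$ and write $m=\sum_{j}a_{j}\xi_{j}$ where, by homogeneity, each $a_{j}\in I^{n+1-d_{j}}$. Since $n+1-d_{j}\geq n+1-n_{0}\geq 1$, we may factor $a_{j}=\sum_{k}b_{jk}c_{jk}$ with $b_{jk}\in I$ and $c_{jk}\in I^{n-d_{j}}$. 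Then $c_{jk}\xi_{j}\in I^{n-d_{j}}M_{d_{j}}\subseteq M_{n}$, so $m=\sum_{j,k}b_{jk}(c_{jk}\xi_{j})\in IM_{n}$, as required.

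The main obstacle is subtle rather than technical: one must be careful to use that homogeneous generators of a graded module over a graded ring suffice to generate it (an analogue of what is observed after Lemma~\ref{lem:noethersch}), and that in a Noetherian ring the submodules $M_{i}$ of the finitely generated module $M$ are automatically finitely generated (so that finite generating sets $S_{i}$ exist). Everything else reduces to factoring an element of $I^{n+1-d_{j}}$ as a product with one factor in $I$, which is immediate from the definition $I^{k+1}=I\cdot I^{k}$ once $k\geq 0$.
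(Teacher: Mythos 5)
Your proof is correct. Note that the paper does not actually prove this lemma itself; it only states it and refers to~\cite{GP07} for the proof. Your argument is the standard textbook one: for (2)$\Rightarrow$(1) you use Noetherianity exactly where it is needed (to get finite generating sets for the submodules $M_{0},\dots,M_{n_{0}}$), and for (1)$\Rightarrow$(2) the reduction to homogeneous generators, the degree comparison giving $a_{j}\in I^{n+1-d_{j}}$, and the factorization through $I\cdot I^{n-d_{j}}$ are all handled properly, so both inclusions $IM_{n}\subseteq M_{n+1}\subseteq IM_{n}$ are established for $n\geq n_{0}$.
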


This criterion helps to prove the famous Artin-Rees Lemma. It states that given a stable filtration of a finitely generated module $M$ and a submodule $N$ of $M$, then intersecting each member of the filtration with  $N$  again yields a stable filtration. The Artin-Rees Lemma can for instance be used to prove Krull's Intersection Theorem \cite{Eis95} and to study modules over local rings \cite{GP07}. For a proof of the above criterion, the Artin-Rees Lemma and the connection between the two lemmas, we refer to \cite{GP07}. 

\smallskip

Let us see how this helps to prove the ZC-Representation Theorem for a commutative Noetherian ring $R$ with unity. Setting  $A=R[t]$ and $I$ as the ideal generated by $t$, we obtain $\overline{A}=R[t]$. By Lemma \ref{lem:noethersch}, $R[t]$ is Noetherian and finite presentation and finite generation coincide not only for $R$-modules, but also for $R[t]$-modules. 

Let $\dapmname$  be a $\dapm$ over $R$ of finitely generated type. Consider the filtration $(\widetilde{M}_n)_{n\in\N}$ defined by $\widetilde{M}_0=\oplus_{i\in\N} M_{i}$ and $\widetilde{M}_n=(t^n)\widetilde{M}_{0}$ for $n>0$. By finite type assumption, there exists an $n_o\in\N$ such that $(t)\widetilde{M}_n=\widetilde{M}_{n+1}$ for all $n\geq n_0$. To use the above criterion, we have to ensure that $\oplus_{i\in\N} M_{i}$ is a finitely generated $R[t]$-module. We do not see how this could follow from Artin-Rees theory since finite generation is an assumption in most of the statements. A proof for the finite generation of  $\oplus_{i\in\N} M_{i}$ is the first part of our proof of Lemma \ref{lem:alpha_lemma}. 

Conversely, using the above criterion, finite generation of the $R[t]$-module $\oplus_{i\in\N} M_{i}$ directly implies $(t)$-stability of the filtration $(\widetilde{M}_n)_{n\in\N}$. Hence the corresponding $\dapm$ $\beta(\oplus_{i\in\N} M_{i})$ is of finite type. It is left to prove that each $M_i$ is finitely generated as an $R$-module. This can easily be seen by the first five lines of our proof of Lemma \ref{lem:comp_are_finitely_presented}.  

\bibliography{algpersmod}

\newpage{} 
\end{document}